\newtheorem{theorem}{Theorem}[section]
\newtheorem{lemma}[theorem]{Lemma}
\newtheorem{assumption}{Assumption A}
\theoremstyle{definition}
\numberwithin{equation}{section}
\newcommand{\N}{\mathcal{N}}
\newcommand{\D}{\mathcal{D}}
\newcommand{\A}{\mathcal{A}}
\begin{document}
	
\title{AS-BOX: Additional Sampling Method for  Weighted Sum Problems with Box Constraints }
\author{
Nataša Krejić\thanks{Department of Mathematics and Informatics, Faculty of Sciences, University of Novi Sad, Trg Dositeja Obradovića 4, 21000 Novi Sad, Serbia. Emails: \texttt{natasak@uns.ac.rs}, \texttt{natasa.krklec@dmi.uns.ac.rs}}, 
Nataša Krklec Jerinkić\footnotemark[1], 
Tijana Ostojić\thanks{Department of Fundamental Sciences, Faculty of Technical Sciences, University of Novi Sad, Trg Dositeja Obradovića 6, 21000 Novi Sad, Serbia. Email: \texttt{tijana.ostojic@uns.ac.rs}}, 
Nemanja Vučićević\thanks{Department of Mathematics and Informatics, Faculty of Sciences, University of Kragujevac, Radoja Domanovića 12, 34000 Kragujevac, Serbia. Email: \texttt{nemanja.vucicevic@pmf.kg.ac.rs}.} \footnote{Corresponding author.}
}
\date{November 25, 2025}
\maketitle

\begin{abstract}
A class of optimization problems characterized by a weighted finite-sum objective function subject to box constraints is considered. We propose a novel stochastic optimization method, named AS-BOX (\text{A}ddi\-ti\-onal \text{S}ampling for \text{BOX} constraints), that combines projected gradient directions with adaptive variable sample size strategies and nonmonotone line search. The method dynamically adjusts the batch size based on progress with respect to the additional sampling function and on structural consistency of the projected direction, enabling practical adaptivity of AS-BOX, while ensuring theoretical support.  We establish almost sure convergence under standard assumptions and provide complexity bounds. Numerical experiments demonstrate the efficiency and competitiveness of the proposed method compared to state-of-the-art algorithms.
\end{abstract}

\textbf{Key words:} 
 Projected Gradient Methods, Sample Average Approximation, Adaptive Variable Sample Size Strategies, Non-monotone Line Search, Additional Sampling, Almost Sure Convergence.

\textbf{MSC Classification:} 90C15,90C26, 90C30, 65K05
\section{Introduction}
\label{sec1}
 We consider a box-constrained optimization problem with the objective function in the form of a weighted finite sum, i.e., 
 \begin{equation} \label{problem}
\min_{x \in S} f(x):=\sum_{i=1}^{N} w_i f_i(x),\;   S= \{x \in \mathbb{R}^n \; | \; l_i \leq x_i \leq u_i, \; i=1,...,n\},
\end{equation}
where $S$ represents the feasible set defined by  
\begin{equation*} 
      l_i \in \mathbb{R}\cup \{-\infty\}, u_i \in \mathbb{R}\cup \{\infty\}, i=1,2,...,n,
\end{equation*}
while $ f_i:\mathbb{R}^n \to \mathbb{R}, i=1,...,N$ are continuously-differentiable functions and  $w_1,...,w_N$ represent the weights such that  
\begin{equation*} 
    \sum_{i=1}^{N} w_i=1, \quad w_i\geq 0, \; i=1,..,N.
\end{equation*}
This problem captures a wide class of practical problems. It generalizes classical unconstrained finite-sum formulations, where $w_i=1/N$ and $S=\mathbb{R}^n$ (e.g., empirical risk minimization with uniform weights) by allowing arbitrary positive weights and bound constraints. The formulation with unequal weights is motivated by the so-called local regression models (see e.g.,\cite{UMU}) where the weights represent the importance (distance) of different data points in the training set with respect to a new data point. However if uniform sampling is more convenient from practical (implementation) point of view, the weights can be integrated to form the local cost functions $\tilde{f}_i(x):=w_i f_i(x)$ and equivalent problem $\min_{x \in S} \frac{1}{N} \sum_{i=1}^{N} \tilde{f}_i(x)$ can be solved. Although applying the proposed algorithm would yield different iterations due to randomness and different subsampling distributions (see \eqref{Nk} ahead), theoretical guarantees remain the same as for the weighted sum case that we consider. Further,   special cases of the considered problem include nonnegative constraints $x\geq 0$, i.e., $x_i \geq 0, i=1,...,n$. Such box-constrained problems naturally arise in machine learning, signal processing, portfolio optimization, and computational statistics \cite{wright}. 

In large-scale optimization problems of form \eqref{problem}, evaluating the full gradient $\nabla f(x) = \sum_{i=1}^N w_i \nabla f_i(x)$ can be computationally expensive, especially when $N$ is large. Stochastic methods based on subsampling are widely used to reduce this cost. One possibility to reduce the cost while maintaining reasonably good approximations of the gradients is to use adaptive sampling strategies. These strategies dynamically refine gradient approximations during the optimization process, mainly by progressive increase of the sample size based on variance estimates or structural indicators such as descent quality or direction stability. This way one is able to maintain a balance between computational efficiency and convergence reliability. Adaptive sampling methods have been extensively studied in recent years \cite{pregledni, SBNKNKJ, SBNKNKJMR, byrd_adap, iusem1, iusem2, nasprvi, proxbb}.

One effective subclass of adaptive sampling is the so-called additional sampling, which typically increases the sample size when a prescribed criterion fails \cite{LSOS, krejic4, aspen,ipas,proxbb,lsnmbb}. The criterion of progress is defined by an additional sampling, i.e., a new independent subsample (of modest size) is generated after each iteration and is used to accept/reject the iteration.  As the additional sample is mainly of modest size, this approach avoids excessive computational cost while still ensures convergence in a stochastic sense.  For example, in the IPAS method \cite{ipas}, the additional sampling is combined with projected gradient steps for problems with linear equality constraints. Sampling growth is governed by a descent-based condition that assess whether the current sample is sufficient to ensure meaningful progress. This mechanism allows the method to operate efficiently in early iterations with small batches and to increase precision only when needed. Furthermore,
the ASPEN method \cite{aspen} extends this idea to nonlinear equality-constrained problems by incorporating a quadratic penalty term. In this method, additional sampling is applied adaptively based on indicators such as gradient norm and descent quality. The method attempts to make progress using the current sample and increases the sample size only when necessary, making it particularly effective near critical points where variance in gradient estimates becomes more pronounced. 

Another key component of the method proposed here is nonmonotone line search (NMLS). Classical Armijo-type condition requires a sufficient decrease in each iteration, which can be restrictive and lead to overly conservative steps in noisy settings. Nonmonotone line searches, on the other hand, allow temporary increases in the objective function, enabling better exploration of the landscape and improving practical performance. A number of NMLS is present in the literature,  \cite{grippo, hz, li} and they are successfully applied in numerous deterministic and stochastic frameworks (e.g., \cite{Birgin1, hu, krejic4, NKNKJ2,  nasdrugi, lsnmbb}). In stochastic settings, the effect of noise and variance by relaxing strict descent conditions are particularly important. The NMLS method we rely on is originally defined in \cite{li}. 

Interior-point methods (IPMs) represent another popular class of algorithms for constrained optimization, known for their strong theoretical properties and practical efficiency. Numerous works have extended IPMs to accommodate large-scale and structured problems, including both deterministic and stochastic settings. For instance, classical interior-point frameworks tailored for convex programming and barrier methods are well-established (e.g., \cite{ nesterov1994interior, wright1997primal}), while more recent advances incorporate stochastic elements or specific constraint structures (e.g., \cite{he2024, roos1997interior}).
In the stochastic optimization literature, interior-point methods have also been adapted to settings where exact gradients are either expensive or impossible to compute. These adaptations often involve inexact or sampled gradient approximations and the use of approximate barrier subproblems to preserve feasibility and convergence properties under uncertainty. A recent contribution in this direction is  \cite{curtis}, where a stochastic gradient-based interior point method (SIPM) to solve box-constrained optimization problems is proposed. 
The SIPM algorithm extends the classical interior-point framework to the stochastic setting by augmenting the objective with a logarithmic barrier that enforces box constraints and by employing a prescribed decreasing sequence of barrier parameters rather than adaptive updates. Unlike standard interior-point methods, SIPM maintains iterates within progressively shrinking inner neighborhoods of the feasible box and avoids fraction-to-the-boundary rules or line searches, which are challenging to implement in stochastic regimes. 

As the baseline algorithm, we use the Projected Stochastic Gradient Method (PSGM), which originates from the classical framework of projected gradient methods 
and is here adapted into a stochastic version following the implementation in \cite{curtis}. PSGM is a projection-based method that iteratively computes stochastic gradient steps on the original objective and projects them back onto the feasible region, i.e., updates are of the form \(x_{k+1} = \pi_{[l,u]}(x_k - \alpha_k g_k)\), where \(\pi_{[l,u]}\) denotes the projection onto the box constraints. While SIPM leverages barrier smoothing to handle boundaries implicitly, PSGM enforces feasibility explicitly through projection. These two methods are used for numerical comparison in this paper. 

The method we propose here, AS-BOX is a novel stochastic optimization algorithm for weighted finite-sum problems with box constraints. Our key contributions include the following. A new stochastic method for solving the box constrained problems is proposed and analysed, both theoretically and numerically. The method relays on the well-established non-monotone line search along the projected subsampled gradient direction. The key innovation is additional sampling of modest size performed in each iteration, which yields two advantages. First, it resolved the theoretical issue of mutual dependence of the direction and stepsize and hence allowed us to prove a.s. convergence of the method. Second, the additional sampling results in a natural subsampling schedule, that is problem dependent (not predefined). The worst-case complexity is also analyzed, providing an expected number of iterations to reach the vicinity of the stationary points of the considered problem. Numerical results are presented on real-world data with logistic regression and Neural Network problems as test cases.


{\bf{Paper organization.}} The paper is organized as follows. Section~\ref{sec2} provides the necessary preliminaries. In Section~\ref{sec3}, we present the AS-NC method designed for problems with non-negativity constraints, including the algorithmic framework and convergence analysis. Although non-negativity constraints are a special case of the general box constraints we start with this case for clarity of exposition. Then, in Section~\ref{sec4} we generalize to AS-BOX method, our main contribution, for solving box-constrained problems. Numerical experiments are reported in Section~\ref{sec5}, while Section~\ref{sec6} concludes the paper. \\
 
\textbf{Notation.} Throughout the paper, we use the following notation:
$\mathbb{R}_+$ denotes the set of non-negative real numbers.
The symbol $\|\cdot\|$ represents the standard Euclidean norm. The expectation operator is denoted by $\mathbb{E}(\cdot)$, and $\mathbb{E}(\cdot \mid \mathcal{F})$ stands for the conditional expectation given a $\sigma$-algebra $\mathcal{F}$.  We use “a.s.” to abbreviate “almost sure”. For a finite set $A,$ $|A|$ denotes its cardinality. 

\section{Preliminaries}\label{sec2}
 Let us denote by $\pi_{S}(y)$ the orthogonal projection of a point $ y$ on the set $S$. 
One can show that the projected gradient direction of the form 
\begin{eqnarray}
    \label{pgd}
    d(x):=\pi_{S}(x-\nabla f(x))-x
\end{eqnarray}
is a descent direction for function $f$ at point $x \in S$ unless $x$ is a stationary point of problem \eqref{problem}. More precisely, the following result is known. 
\begin{theorem} \cite{Birgin1}
    \label{bmr} Assume that $f \in C^1(S_k)$ and $x \in S$. Then the projected gradient direction \eqref{pgd} satisfies: 
    \begin{itemize}
        \item[a)] $d^T(x) \nabla f(x) \leq -\|d(x)\|^2.$
        \item[b)] $d(x)=0$ if and only if $x$ is a stationary point of problem \eqref{problem}.
    \end{itemize}
\end{theorem}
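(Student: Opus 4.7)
My plan is to derive both parts from a single ingredient: the variational characterization of the orthogonal projection onto a closed convex set. Since $S$ is the Cartesian product of intervals, it is closed and convex, so for every $z\in\mathbb{R}^n$ the point $y=\pi_S(z)$ is the unique element of $S$ satisfying
\begin{equation*}
(z-y)^T(w-y)\le 0 \qquad\text{for all }w\in S.
\end{equation*}
I would apply this with $z=x-\nabla f(x)$ and $y=\pi_S(x-\nabla f(x))=x+d(x)$, so that $z-y=-\nabla f(x)-d(x)$. Note that $x+d(x)\in S$ by construction, and $x\in S$ is assumed, so both points are admissible in the inequality.

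For part (a), I would specialize the variational inequality to the test point $w=x\in S$. After substitution it reduces to
\begin{equation*}
\bigl(-\nabla f(x)-d(x)\bigr)^T\bigl(-d(x)\bigr)\le 0,
\end{equation*}
which upon expansion yields exactly $\nabla f(x)^T d(x)+\|d(x)\|^2\le 0$, i.e.\ the claimed inequality $d(x)^T\nabla f(x)\le -\|d(x)\|^2$. No further computation is needed.

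For part (b), the ``if'' direction is immediate: if $x$ is stationary for \eqref{problem}, the first-order optimality condition $\nabla f(x)^T(w-x)\ge 0$ for every $w\in S$ is equivalent to $-\nabla f(x)\in N_S(x)$, which is in turn equivalent to $\pi_S(x-\nabla f(x))=x$, that is, $d(x)=0$. For the converse, assume $d(x)=0$, so $\pi_S(x-\nabla f(x))=x$. Plugging $y=x$ and $z=x-\nabla f(x)$ into the variational inequality gives $-\nabla f(x)^T(w-x)\le 0$ for every $w\in S$, which is precisely the stationarity condition for problem \eqref{problem}. An equivalent componentwise argument is available using the explicit formula \eqref{pibox} by inspecting the three possible cases $x_i=l_i$, $x_i=u_i$, and $l_i<x_i<u_i$, matching the KKT sign conditions on $\partial f/\partial x_i$, but the projection-based derivation above is more transparent.

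No real obstacle is anticipated here; the only subtlety is recognizing that both parts follow from the single obtuse-angle characterization of projection and that $S$ being convex (guaranteed by its product-of-intervals structure) is exactly what enables that characterization. This keeps the proof entirely independent of the box structure and therefore reusable later when the analysis is lifted to AS-BOX.
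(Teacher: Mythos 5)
Your proof is correct and follows the standard argument: the paper itself states this result without proof, citing \cite{Birgin1}, and the derivation there (and in the classical references it draws on) is exactly your obtuse-angle characterization of the projection, specialized to $w=x$ for part (a) and combined with the equivalence $\pi_S(x-\nabla f(x))=x \iff -\nabla f(x)\in N_S(x)$ for part (b). Nothing is missing; the only cosmetic remark is that the hypothesis in the statement should read $f\in C^1(S)$ rather than $C^1(S_k)$, which your argument implicitly (and correctly) assumes.
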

We will be dealing with approximate evaluations of the objective function and its gradients. More precisely, we use the following sample-based estimate of the objective function at iteration $k$ in general \cite{ipas} 
\begin{equation} \label{fnk}
f_{\N_k}(x):=\frac{1}{N_k}\sum_{i\in \N_k}  f_i(x),
\end{equation}
where $N_k:=| \N_k|$, $\N_k=\{i^k_1,...,i^k_{N_k}\}$, and  each $i^k_j \in \N_k$ takes the value $s \in \N:=\{1,...,N\}$ with probability $w_s$, i.e., 
\begin{equation} \label{Nk} P(i^k_j=s)=w_s,\quad  s=1,...,N,\; j=1,...,N_k.
\end{equation}
 This way we have an unbiased estimate of $f$, i.e., 
$$\mathbb{E}(f_{\N_k}(x)|x)=\mathbb{E}(\frac{1}{N_k}\sum_{j=1}^{N_k}  f_{i^k_j}(x)|x)=\frac{1}{N_k}\sum_{j=1}^{N_k}  \mathbb{E}(f_{i^k_j}(x)|x)=\frac{1}{N_k}\sum_{j=1}^{N_k}  f(x)=f(x),$$
where $\mathbb{E}(\cdot|x)$ denotes conditional probability given the point $x$. However, this is not crucial for the analysis, and the convergence results hold for an arbitrary sampling of $\N_k$ as well. Moreover, since the method that will be proposed in the sequel may reach the full sample size, we will assume that when $N_k=N$ we simply take the whole sample, i.e., $\N_k=\N$. The approximate gradient will be taken as the gradient of the approximate function $\nabla f_{\N_k}$.

Since we work with approximate functions in general,  non-monotone Armijo-type line search will be employed \cite{li} to determine the step size $t_k$ given a direction $p_k= \pi_S(x_k-\nabla f_{\N_k}(x_k))-x_k$
\begin{equation*} 
f_{\N_k}(x_k+t_k p_k)\leq f_{\N_k}(x_k)  +
c_1 t_k (\nabla f_{\N_k}(x_k))^T p_k+\varepsilon_k,
\end{equation*}
where  $\varepsilon_k>0, k \in \mathbb{N}$ represents a predetermined sequence which satisfies the following condition
\begin{equation} 
\label{sumable}  
\sum_{k=0}^{\infty} \varepsilon_k \leq \bar{\varepsilon}< \infty.
\end{equation}
Notice that the search direction $p_k$ is a descent direction for the function $f_{\N_k}$ at point $x_k$. Moreover, $x_k+p_k$ is feasible provided that $x_k $ is feasible as well, and due to the convexity of $S$, backtracking line search will ensure that $x_k+t_k p_k $ remains in the feasible set. Thus, starting from $x_0 \in S$, the proposed algorithm will ensure the feasibility of all the iterates. 

We apply an additional sampling technique to guide the sample size increase. Additional sampling is used to overcome bias that comes from the dependency of the candidate iterate $\bar{x}_k=x_k+t_k p_k $ on the sample $\N_k$. Moreover, it can be viewed as a check on the similarity of the local cost functions - if they are heterogeneous, then it is probably beneficial to increase the sample size since the mini-batch estimate is not good enough representative of the objective function. For more details one can see \cite{ipas} and the references therein. We form an additional sampling function similarly to $f_{\N_k}$, but with a much smaller sample in general. Namely, 
we have 
\begin{equation*}
f_{\D_k}(x):=\frac{1}{D_k}\sum_{i\in \D_k}  f_i(x),
\end{equation*}
where $D_k:=| \D_k|$, $\D_k=\{l^k_1,...,l^k_{D_k}\}$, and  each $l^k_j \in \D_k$ takes the value $s \in \N:=\{1,...,N\}$ with probability $w_s$, i.e., 
\begin{equation} 
\label{Dk} 
P(l^k_j=s)=w_s, \quad s=1,...,N, \quad j=1,...,D_k.
\end{equation}
Although $D_k$  may be arbitrary, it is assumed that it is significantly smaller than $N_k$, and the common choice is $D_k=1$ for all $k$. The additional sampling rule within this paper is adapted to box constraints.
 The additional sampling rule is also used to guide the acceptance of the candidate point. We will elaborate this in more detail in the next section. Finally, we emphasize that the additional sampling rule is constructed to determine if the sample size increase is needed, but allows an arbitrary increase. 

For simplicity, we start our analysis by observing non-negativity constraints, and later on we extend it to general box constraints by introducing some simple modifications within the algorithm and the convergence analysis.

\section{Nonnegativity Constraints: AS-NC method}
\label{sec3}
Within this section we consider an important special case of problem \eqref{problem}  given by 
\begin{equation}
    \label{problem-nc}
    \min_{x \geq 0} f(x),
\end{equation}
where the function $f$ is as in problem \eqref{problem} and inequalities $x \geq 0$ are component-wise.  Compared to the general box-constrained problem, this setting simplifies the structure of the feasible set, and we have  
$[\pi_S(y)]_i = \max\left\{ y_i,\, 0 \right\},i=1,..,n.  $ Since our direction will be of the form 
\begin{eqnarray}
    \label{pk}
    p_k = \pi_S(x_k-\nabla f_{\N_k}(x_k))-x_k,
\end{eqnarray}
we will distinguish two cases for each component $i \in \{1,...,n\}$: 
$$[p_k]_i=-[x_k]_i \quad \mbox{if } \quad [x_k]_i<[\nabla f_{\N_k}(x_k)]_i$$
and 
$$[p_k]_i=-[\nabla f_{\N_k}(x_k)]_i \quad \mbox{if} \quad [x_k]_i\geq[\nabla f_{\N_k}(x_k)]_i.$$ 
Let us denote by $I_{\N_k}$ an indicator vector of the event $x_k<\nabla f_{\N_k}(x_k), $ with inequality defined by components. More precisely,  for $i = 1, \ldots, n$ we have 
\begin{equation}
\label{INk}
[I_{\mathcal{N}_k}]_i =
\begin{cases}
1, & [x_k]_i < [\nabla f_{\mathcal{N}_k}(x_k)]_i \\
0, & [x_k]_i \geq  [\nabla f_{\mathcal{N}_k}(x_k)]_i.
\end{cases} 
\end{equation}
 Analogously, we define an indicator vector $I_{\D_k}$ of the event $x_k<\nabla f_{\D_k}(x_k)$  and 
\begin{equation}
    \label{rk}
    r_{\D_k}:=\|I_{\N_k}-I_{\D_k}\|.
\end{equation}
 Given that $ \mathcal{N}_k, \mathcal{D}_k $ and $ x_k $ are random, the values $ r_{\D_k} $ are also random and will be used 
to check the similarity of local cost functions in terms of the structure of the search direction $p_k$. Namely, notice that if $r_{\D_k}=0$ then  the structure of zero entries in $\pi_S(x_k-\nabla f_{\N_k}(x_k))$ is the same as for $\pi_S(x_k-\nabla f_{\D_k}(x_k))$.

\subsection{The Algorithm}

We state the algorithm for solving \eqref{problem-nc} as follows. 

\noindent {\bf Algorithm 1: AS-NC} (\textbf{A}dditional \textbf{S}ampling - \textbf{N}onnegativity \textbf{C}onstraints)
\label{AS-NC}
\begin{itemize}
\item[\textbf{S0}] \textit{Initialization.} Input: $x_0\geq 0, N_0 \in \mathbb{N}, \beta, c,c_1 \in (0,1), C>0$, $\{\varepsilon_k\}$ satisfying \eqref{sumable}. Set $k:=0$.
\item[\textbf{S1}] \textit{Subsampling.} If $N_k<N$, choose $\N_k$ such that \eqref{Nk} holds. Else, set $f_{\N_k}=f$.
\item[\textbf{S2}] \textit{Search direction.} Compute 
$ p_k = \pi_S(x_k-\nabla f_{\N_k}(x_k))-x_k. $
\item[\textbf{S3}] \textit{Step size.} Find  the smallest $j \in \mathbb{N}_0$ such that $t_k=\beta^j$ satisfies 
\begin{eqnarray}
    \label{ls}
    f_{\N_k}(x_k+t_k p_k)\leq f_{\N_k}(x_k)  + c_1 t_k (\nabla f_{\N_k}(x_k))^T p_k+\varepsilon_k.
\end{eqnarray}
Set $\bar{x}_k=x_k+t_k p_k$. 
\item[\textbf{S4}] \textit{Additional sampling.}\\ 
If $N_k=N$, set $x_{k+1}=\bar{x}_k$, $k=k+1$ and go to step S1.\\
Else choose $\D_k$ via \eqref{Dk} and compute \begin{eqnarray}
    \label{ddk}
    s_k = \pi_S(x_k-\nabla f_{\D_k}(x_k))-x_k
\end{eqnarray}
and $r_{\D_k} =  \|I_{\N_k}-I_{\D_k}\|.$  
\item[\textbf{S5}] \textit{Sample size update.}\\ If 
\begin{eqnarray}
    \label{ac}
   r_{\D_k}=0 \quad \mbox{and} \quad  f_{\D_k}(\bar{x}_k)\leq f_{\D_k}(x_k)  - c  \|   s_k\|^2+C\varepsilon_k,
\end{eqnarray}
$N_{k+1}=N_k$.\\ 
Else choose $N_{k+1} \in \{N_{k}+1,...,N\}$. 
\item[\textbf{S6}] \textit{Iterate update.} \\If 
$$f_{\D_k}(\bar{x}_k)\leq f_{\D_k}(x_k)  - c  \|   s_k\|^2+C\varepsilon_k$$ holds 
set $x_{k+1}=\bar{x}_k$. Else $x_{k+1}=x_k$. 
\item[\textbf{S7}] \textit{Counter update.} Set $k=k+1$ and go to Step \textbf{S1}.  
\end{itemize}

Notice that the algorithm can yield two types of scenarios: the Mini-batch (MB) scenario, where $N_k<N$ for all $k \in \mathbb{N}$, and the  Full sample (FS) scenario, where the full sample is eventually reached. Moreover, we say that AS-NC is in the MB phase at iteration $k$ if $N_k<N$. Otherwise, the full sample size is reached, i.e., if $N_k=N$, for some $ k $ then all further iterations have the same property and we say that we are in the FS phase.  In that case, the algorithm behaves as a deterministic projected gradient method. However, the sequence of iterates is still random due to sampling in the initial (MB) phase of the algorithm. 

In the MB phase, we have sampling at two steps of the algorithm: S1 and S4. Although we propose unbiased estimators \eqref{Nk} in step S1, the sampling used for $\N_k$ can in fact be arbitrary. This allows many strategies which can be very important from a practical point of view. Moreover, the choice of $\D_k$ may be modified as well, but it has to meet certain requirements  - it needs to be chosen independently of $N_k$ and it must allow positive probabilities for choosing each of the local cost function. Unbiased estimator is not essential for the convergence analysis. 

Notice that the sequence of iterates is feasible due to the construction of the algorithm. The search direction $p_k$ is a descent direction for $f_{\N_k}$ and feasible with respect to constraints, while backtracking line search retains feasibility.  The same type of direction is calculated in step S4, but with respect to $f_{\D_k}$, which is independent of $f_{\N_k}$. However, the Armijo-like condition is checked without performing any line search - it simply checks if the candidate point $\bar{x}_k$ is good enough for $f_{\D_k}$, which is an independent estimate of the objective function. Notice that in this check the constants $ c $ and $ C $ can be arbitrary small and large, respectively.  If the value of $ f_{\D_k}$ is good enough the candidate point is accepted at step S6. Otherwise, the step is rejected and the sample size $N_k$ is increased within step S5. The increase is arbitrary, as mentioned in Preliminaries. However, the sample size  $N_k$ can be increased also due to the different structure of the projection considering two approximate gradients $\nabla f_{\N_k}$ and $\nabla f_{\D_k}$, which discloses through $r_{\D_k}>0.$ Overall, the condition \eqref{ac} serves as the check of similarity of local cost functions and governs the sample size. Notice that calculating $r_{\D_k}$ does not yield additional costs since the structure observed in \eqref{INk} is needed for forming the projections as well. 


\subsection{Convergence analysis}
Within this section, we prove almost sure convergence of the proposed algorithm and analyze the complexity. 
We start the analysis by stating the following standard assumption. 

\begin{assumption}
\label{A1} Each function $f_i, i=1,...,N$ is continuously differentiable with $L$-Lispchitz continuous gradient and bounded from below by a constant $f_{low}$. 
\end{assumption}
As usual for additional sampling framework analysis, we proceed by dividing the set of all possible outcomes at iteration $k$ into two complementary subsets. Namely, let us denote  by $\D_k^+$ the subset of all possible outcomes of $\D_k$ at iteration $k$ for which the condition \eqref{ac} is satisfied, i.e.,
\begin{equation*} 
    \D_k^+= \{\D_k \subset \N \; \vert \; r_{\D_k}=0, \quad   f_{\D_k}(\bar{x}_k)\leq f_{\D_k}(x_k)  - c  \|   s_k\|^2+C\varepsilon_k \}.
\end{equation*}
We denote the complementary subset of outcomes at iteration $k$ by
\begin{equation*} 
    \D_k^{-}= \{\D_k \subset \N\; \vert \; r_{\D_k}>0 \quad \mbox{or} \quad   f_{\D_k}(\bar{x}_k)> f_{\D_k}(x_k)  - c  \|   s_k\|^2+C\varepsilon_k \}.
\end{equation*}

We begin our analysis with the following lemma, which basically describes the situation in which the full sample is not reached, based on choices of $\D_k$ that violate \eqref{ac}.   This lemma is conceptually aligned with Lemma 4.3 in \cite{ipas}, and the proof is the same as in \cite{ipas}, so it is omitted here.
\begin{lemma}\label{Lemma-mbdkminus} [\protect{\cite[Lemma 4.3]{ipas}}]
    Suppose that Assumption A\ref{A1} holds. If $N_k<N$ for all $k \in \mathbb{N}$, then a.s. there exists $k_1 \in \mathbb{N}$ such that $\D_k^{-}=\emptyset$ for all $k \geq k_1$.
\end{lemma}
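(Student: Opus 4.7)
The plan is to argue by a conditional Borel--Cantelli style contradiction. The hypothesis $N_k<N$ for all $k$ caps the sample size, while every realization of $\D_k$ lying in $\D_k^-$ forces a strict increase of $N_k$ in step S5; these two facts together should force $\D_k^-$ to be empty for all large enough $k$ almost surely.

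First I would fix a filtration $\F_k$ containing all randomness generated through the selection of $\N_k$, so that $x_k$, $\bar x_k$, $I_{\N_k}$, $N_k$, and hence the set $\D_k^-$ itself are $\F_k$-measurable. Setting $A_k:=\{\D_k\in\D_k^-\}$, step S5 gives the pathwise bound $\sum_{k=0}^{\infty}\mathbf{1}_{A_k}\leq N-N_0$ on the event $\{N_k<N \text{ for all } k\}$, since each occurrence of $A_k$ increases the integer-valued $N_k$ by at least one and $N_k$ never reaches $N$ on this event. In particular $\sum_k \mathbf{1}_{A_k}<\infty$ almost surely on this event.

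Next I would establish a uniform lower bound $P(A_k\mid\F_k)\geq p_{\min}\,\mathbf{1}_{\{\D_k^-\neq\emptyset\}}$ for some fixed $p_{\min}>0$, exploiting independence of $\D_k$ from $\F_k$ and the sampling scheme \eqref{Dk}: whenever $D_k$ is uniformly bounded and the sampling weights entering \eqref{Dk} are positive, each concrete multi-index of size $D_k$ is drawn with probability at least $(\min_{s:\,w_s>0} w_s)^{D_{\max}}>0$. From here the conditional (L\'evy) Borel--Cantelli lemma converts the a.s.\ finiteness of $\sum_k\mathbf{1}_{A_k}$ into the a.s.\ finiteness of $\sum_k P(A_k\mid\F_k)$, which together with the lower bound yields $\sum_k\mathbf{1}_{\{\D_k^-\neq\emptyset\}}<\infty$ a.s., and hence the claim holds with $k_1$ taken as one more than the largest index at which $\D_k^-\neq\emptyset$.

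The main technical hurdle is establishing the uniform lower bound $p_{\min}$: one has to verify $\F_k$-measurability of $\D_k^-$ and argue that independence of $\D_k$ from $\N_k$ together with strict positivity of the relevant weights makes every nonempty $\D_k^-$ reachable with at least a fixed positive probability. This is exactly the point at which the argument parallels Lemma~4.3 of \cite{ipas}, so transferring that proof essentially amounts to checking that the projected-gradient direction $p_k$ and the indicator $r_{\D_k}$ introduce no new measurability or independence obstructions.
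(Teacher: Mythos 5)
Your argument is correct and rests on the same two pillars as the paper's: (i) on the event $\{N_k<N \text{ for all } k\}$ the outcome $\D_k\in\D_k^-$ forces a strict increase of the integer-valued, non-decreasing $N_k$ and hence can occur at most $N-N_0$ times, and (ii) whenever $\D_k^-\neq\emptyset$, the sampling rule \eqref{Dk} guarantees a uniform lower bound $q=(\min_s w_s)^{N-1}$ on the conditional probability of landing in $\D_k^-$. Where you differ is in the probabilistic machinery used to combine them: the paper argues by contradiction along an infinite subsequence $K$ with $\D_k^-\neq\emptyset$ and concludes from $\mathbb{P}(\D_k\in\D_k^+,\ k\in K)\leq\prod_{k\in K}(1-q)=0$ that some $\D_k\in\D_k^-$ must occur, pushing $N_k$ above its eventual value $\bar N$; you instead invoke L\'evy's conditional Borel--Cantelli lemma to pass from the pathwise finiteness of $\sum_k\mathbf{1}_{\{\D_k\in\D_k^-\}}$ to the finiteness of $\sum_k \mathbb{P}(\D_k\in\D_k^-\mid\F_k)$, and then read off that $\D_k^-\neq\emptyset$ only finitely often. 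Your route is arguably the more careful one: the paper's product bound tacitly treats the events $\{\D_k\in\D_k^+\}$ as conditionally independent across $k\in K$ (the sets $\D_k^-$ depend on the past through $x_k$, $\bar x_k$ and $\N_k$), whereas the conditional Borel--Cantelli argument needs only adaptedness of $A_k$ to the filtration plus independence of the draw $\D_k$ from $\F_k$, which is exactly what the algorithm provides. The one point to pin down in both versions is the positivity of $q$: if some $w_s=0$ one must check that $\D_k^-$, when nonempty, contains a realization reachable with positive probability (e.g.\ the repeated singleton $\{j_v,\dots,j_v\}$ with $w_{j_v}>0$ used in the paper's footnote); your restriction to $\min_{s:\,w_s>0}w_s$ flags this correctly but does not fully close it, and the paper implicitly assumes all weights are positive.
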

The following lemma states the well-known result for backtracking line search under the stated assumptions since, according to Theorem \ref{bmr} a),  there holds 
$$p_k^T \nabla f_{\N_k}(x_k) \leq -\|p_k\|^2.$$
\begin{lemma}\label{lemma-armijo}
Suppose that Assumption A\ref{A1} holds. Then the step size $t_k$ obtained from  step S3 satisfies 
\begin{align*}
t_k\geq t_{min}:=\min\left\{1,\dfrac{2\beta(1-c_1)}{ L}\right\}.
\end{align*}
\end{lemma}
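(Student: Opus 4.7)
The plan is to follow the standard backtracking argument, exploiting the $L$-Lipschitz property of $\nabla f_{\N_k}$ (inherited from each $f_i$) together with the descent property of $p_k$ stated in Theorem \ref{bmr}(a).

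First I would apply the descent (quadratic upper bound) lemma to $f_{\N_k}$, which holds since $\nabla f_{\N_k}$ is $L$-Lipschitz as a convex combination of $L$-Lipschitz gradients. For any $t>0$,
\begin{equation*}
f_{\N_k}(x_k + t p_k) \leq f_{\N_k}(x_k) + t \nabla f_{\N_k}(x_k)^T p_k + \frac{L t^2}{2}\|p_k\|^2.
\end{equation*}
Subtracting the Armijo-type right-hand side from \eqref{ls}, it suffices (for the line search condition to hold) to ensure
\begin{equation*}
(1-c_1)\, t\, \nabla f_{\N_k}(x_k)^T p_k + \frac{L t^2}{2}\|p_k\|^2 \leq \varepsilon_k.
\end{equation*}

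Next, I would invoke Theorem \ref{bmr}(a), which gives $\nabla f_{\N_k}(x_k)^T p_k \leq -\|p_k\|^2$, so the left-hand side above is bounded by $t\|p_k\|^2 \bigl(\tfrac{Lt}{2} - (1-c_1)\bigr)$. Whenever $t \leq \tfrac{2(1-c_1)}{L}$ this quantity is nonpositive, hence strictly smaller than $\varepsilon_k > 0$, and the line search condition \eqref{ls} is automatically satisfied.

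Finally, I would translate this into a lower bound on $t_k$ via the backtracking rule. If $j=0$ already works, then $t_k = 1 \geq t_{\min}$. Otherwise, by minimality of $j$, the trial step $\beta^{j-1}$ fails \eqref{ls}, and by the sufficient condition just derived this forces $\beta^{j-1} > \tfrac{2(1-c_1)}{L}$; multiplying by $\beta$ yields $t_k = \beta^j > \tfrac{2\beta(1-c_1)}{L}$. In either case $t_k \geq t_{\min}$. The only subtlety — and the one place where the argument differs slightly from the purely deterministic template — is the presence of the $\varepsilon_k$ slack term; but since $\varepsilon_k > 0$, it only relaxes the Armijo condition and does not harm the sufficient-condition step, so no real obstacle arises.
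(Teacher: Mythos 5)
Your argument is correct and is precisely the standard backtracking bound that the paper invokes without proof (it only notes that the result follows from $p_k^T\nabla f_{\N_k}(x_k)\le -\|p_k\|^2$ of Theorem \ref{bmr}(a) and $L$-Lipschitz continuity of $\nabla f_{\N_k}$). Your use of the descent lemma, the sufficient-decrease threshold $t\le 2(1-c_1)/L$, and the minimality of $j$ reproduces exactly the intended reasoning, including the harmless role of the $\varepsilon_k$ slack.
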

Next, we prove the key result for the convergence analysis of AS-NC. Notice that \eqref{glavna1} is related to the original objective function and  $d$  defined as in  \eqref{pgd}, $ d(x):=\pi_{S}(x-\nabla f(x))-x$, regardless of the scenario (MB or FS). 

\begin{theorem} \label{teorema1}
    Suppose that Assumption A\ref{A1} holds. Then a.s. there exists a finite, random iteration $\tilde{k}$ such that for  all $k \geq \tilde{k}$ there holds 
    \begin{equation}
        \label{glavna1}
        f(x_{k+1})\leq f(x_k) -\bar{c} \|d(x_k)\|^2+\bar{C} \varepsilon_k,
    \end{equation}
    where $\bar{c}=\min \{c, c_1, 2c_1 (1-c_1) \beta / L\}$ and  $\bar{C}=\max \{1,C\}$.
\end{theorem}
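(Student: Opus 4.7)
The plan is to split the analysis along the two mutually exclusive scenarios described after Algorithm~1 and prove \eqref{glavna1} in each. In the full-sample (FS) case there is a (possibly random) first iteration $k_0$ with $N_{k_0}=N$; for every $k\ge k_0$ step S1 gives $f_{\N_k}=f$, so $p_k=d(x_k)$, and step S4 immediately sets $x_{k+1}=\bar{x}_k$. Plugging $p_k=d(x_k)$ into the line-search inequality \eqref{ls} and combining Theorem~\ref{bmr}(a) with Lemma~\ref{lemma-armijo}, one gets
\[
f(x_{k+1})\le f(x_k)+c_1 t_k\,\nabla f(x_k)^T d(x_k)+\varepsilon_k\le f(x_k)-c_1 t_{\min}\|d(x_k)\|^2+\varepsilon_k,
\]
and \eqref{glavna1} follows because $c_1 t_{\min}\ge \bar c$ and $1\le \bar C$; so $\tilde k=k_0$ works in this case.

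For the mini-batch (MB) scenario, Lemma~\ref{Lemma-mbdkminus} supplies an a.s.\ finite $k_1$ with $\D_k^{-}=\emptyset$ for every $k\ge k_1$. Since $\D_k^{-}$ is defined by quantifying over all subsets of $\N$, emptiness means that for every $\D\subset \N$ both $r_{\D}=0$ and $f_{\D}(\bar x_k)\le f_{\D}(x_k)-c\|s_{\D}\|^2+C\varepsilon_k$ hold, where $s_{\D}=\pi_S(x_k-\nabla f_{\D}(x_k))-x_k$. Applied to every singleton $\D=\{i\}$, the first part gives $I_{\N_k}=I_{\{i\}}$ for all $i=1,\ldots,N$.

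Read componentwise, $I_{\{i\}}=I_{\N_k}$ for every $i$ forces, at each index $j$, all the numbers $[\nabla f_i(x_k)]_j$ to lie on the same side of $[x_k]_j$. Because $\sum_i w_i=1$ with $w_i\ge 0$, this side is inherited by the convex combination $[\nabla f(x_k)]_j=\sum_i w_i[\nabla f_i(x_k)]_j$, so the full-gradient indicator $I$ of the event $x_k<\nabla f(x_k)$ also equals $I_{\N_k}$. Consequently the projection patterns of $d(x_k)$, of $p_k$ and of every $s_{\{i\}}$ coincide, and examining each component in the two cases $[I]_j\in\{0,1\}$ yields the linear identity $d(x_k)=\sum_i w_i\, s_{\{i\}}$.

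Applying the Armijo-like part of the $\D_k^{-}=\emptyset$ condition to $\D=\{i\}$ gives $f_i(\bar x_k)\le f_i(x_k)-c\|s_{\{i\}}\|^2+C\varepsilon_k$; averaging against the weights $w_i$ and using convexity of $\|\cdot\|^2$ produces
\[
f(\bar x_k)\le f(x_k)-c\sum_i w_i\|s_{\{i\}}\|^2+C\varepsilon_k\le f(x_k)-c\|d(x_k)\|^2+C\varepsilon_k.
\]
Since the realized $\D_k$ itself lies in $\D_k^{+}$, step S6 sets $x_{k+1}=\bar x_k$, and $c\ge \bar c$, $C\le \bar C$ yield \eqref{glavna1} for all $k\ge k_1$; taking $\tilde k=k_0$ in the FS scenario and $\tilde k=k_1$ in the MB scenario completes the argument. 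The non-routine step is the structural one in the third paragraph: extracting from $\D_k^{-}=\emptyset$ the coincidence $I_{\N_k}=I$ and thereby the identity $d(x_k)=\sum_i w_i s_{\{i\}}$; once that is established, the rest reduces to a convex combination of per-sample Armijo inequalities and a single Jensen estimate.
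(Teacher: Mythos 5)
Your proof is correct and follows essentially the same route as the paper: the same FS/MB case split, the same use of Lemma~\ref{Lemma-mbdkminus} to force both parts of \eqref{ac} for every singleton, the same transfer of the projection pattern to the full gradient, and the same Jensen estimate. The only (cosmetic) difference is that you package the final step as the vector identity $d(x_k)=\sum_j w_j s_k^j$ followed by one application of convexity of $\|\cdot\|^2$, whereas the paper argues componentwise, splitting the indices into $\A_{\N_k}$ (where the identity is exact) and its complement (where Jensen is applied coordinate by coordinate); both yield $\|d(x_k)\|^2\le\sum_j w_j\|s_k^j\|^2$.
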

    \begin{proof} Let us consider \text{the FS scenario first}. Then there exists a finite $\tilde{k}_1$ such that for all $k \geq \tilde{k}_1$ we operate with the true objective function $f$ and according to \eqref{ls} there holds 
    $$f(x_{k+1})\leq f(x_k)  + c_1 t_k (\nabla f(x_k))^T d(x_k)+\varepsilon_k\leq f(x_k)  - c_1 t_k \|d(x_k)\|^2+\varepsilon_k,$$
    where the last inequality comes from Theorem \ref{bmr} a). 
    Moreover,  Lemma \ref{lemma-armijo} implies that 
    $t_k\geq t_{min}$ 
    and thus we obtain 
        \begin{equation}
            \label{propfs}
            f(x_{k+1})\leq f(x_k)  - c_1 t_{min} \|d(x_k)\|^2+\varepsilon_k. 
        \end{equation}

    Now, let us observe \text{the MB scenario}. According to Lemma \ref{Lemma-mbdkminus} a.s. there exists some finite, random iteration $k_1$ such that $\D_k^{-}=\emptyset$ for all $k \geq k_1$. This means that the condition \eqref{ac} holds for all the local cost functions\footnote{Otherwise the set $\D_k^{-}$ would not be empty since one could form at least one possible $\D_k$ that violates \eqref{ac}, e.g., $\D_k=\{j_v,...,j_v\}$ where $j_v$ represents a local cost function that violates \eqref{ac}. }. 
    Therefore, for all $k \geq k_1$ and all $j \in \N$ there holds 
    $$f_{j}(\bar{x}_k)\leq f_{j}(x_k)  - c  \| s^j_k\|^2+C\varepsilon_k,$$
    where 
    $s^j_k := \pi_S(x_k-\nabla f_{j}(x_k))-x_k.$ Using the fact that in the considered scenario the candidate point would be accepted, i.e., $x_{k+1}=\bar{x}_k$, multiplying both sides with $w_j$ and summing up, we obtain 
    \begin{equation}
        \label{mbsum}
    f(x_{k+1})\leq f(x_k)  - c \sum_{j=1}^{N} w_j \| s^j_k\|^2+C\varepsilon_k.
    \end{equation}
    Let us consider the first condition of \eqref{ac}. Denote by $\A_{\N_k}$ the set of indices (components)  $i \in \{1,...,n\}$  such that $[I_{\N_k}]_i=1$, i.e., 
    \begin{equation}
        \label{aknk}
        \A_{\N_k}:=\{i \in \{1,...,n\} \; \vert \; [x_k]_i<[\nabla f_{\N_k}(x_k)]_i\}.
    \end{equation}
   Furthermore, using the similar arguments as for the second condition of \eqref{ac}, we conclude that $\D_k^{-}=\emptyset$ for all $k \geq k_1$ implies that $r_{\D_k}=0$ for all the singleton choices $\D_k=\{1\},..., \D_k=\{N\}$ for all $k \geq k_1$.  Having in mind the definition of $r_{\D_k}$ we conclude that for all $k \geq k_1$
   \begin{equation}
        \label{akovi}
        \A_{\N_k}=\A^1_{k}=...=\A^N_{k},
    \end{equation}
    where
   $\A^j_{k}:=\{i \in \{1,...,n\} \; \vert \; [x_k]_i<[\nabla f_{j}(x_k)]_i\}, j=1,...,N.$ 
   This further implies that all $k \geq k_1$, for all $i \in \A_{\N_k}$, for all $j \in \N$ there holds 
   $[x_k]_i<[\nabla f_{j}(x_k)]_i$ and thus 
   $$[x_k]_i=\sum_{j=1}^N w_j [x_k]_i<\sum_{j=1}^N w_j[\nabla f_{j}(x_k)]_i=[\nabla f (x_k)]_i, \quad \mbox{for all } \quad i \in \A_{\N_k}.$$
   Similarly, we obtain 
   $[x_k]_i\geq [\nabla f (x_k)]_i$ for all $i \notin \A_{\N_k}$ and due to \eqref{pgd} we conclude that the following holds for all $k \geq k_1$
   \begin{equation}
        \label{dgrad}
        [d(x_k)]_i=-[x_k]_i, \; \mbox{for all} \; i \in \A_{\N_k},  \; [d(x_k)]_i=-[\nabla f (x_k)]_i \; \mbox{for all} \; i \notin \A_{\N_k}.
    \end{equation}
   Now, let us estimate the norm of $d(x_k)$ for $k \geq k_1$ as follows 
   \begin{eqnarray} \label{p1} 
    \|d(x_k)\|^2 &=& \sum_{i=1}^{n} ([d(x_k)]_i)^2=\sum_{i\in \A_{\N_k}} ([d(x_k)]_i)^2+\sum_{i\notin \A_{\N_k}} ([d(x_k)]_i)^2\\\nonumber 
    & = & \sum_{i\in \A_{\N_k}} ([x_k]_i)^2+\sum_{i\notin \A_{\N_k}} ([\nabla f (x_k)]_i)^2. 
\end{eqnarray}
According to \eqref{akovi} we have for all $k \geq k_1$ and all $j \in \N$
$$[s^j_k]_i = [\pi_S(x_k-\nabla f_{j}(x_k))]_i-[x_k]_i=-[x_k]_i, \; i \in \A_{\N_k}$$
and thus for all $ i \in \A_{\N_k} $
\begin{equation}
     \label{p2} 
\sum_{j=1}^{N} w_j ([s^j_k]_i)^2 =\sum_{j=1}^{N} w_j ([x_k]_i)^2=([x_k]_i)^2,
\end{equation}
which further implies 
    \begin{equation}
     \label{p3} 
\sum_{i \in \A_{\N_k}} \sum_{j=1}^{N} w_j   ([s^j_k]_i)^2 =\sum_{i \in \A_{\N_k}}([x_k]_i)^2,
\end{equation}
Similarly, we conclude that for all $k \geq k_1$ and all $j \in \N$ there holds 
$$[s^j_k]_i =-[\nabla f_{j}(x_k)]_i, \; i \notin \A_{\N_k}$$
and  we conclude that for all $i \notin \A_{\N_k}$
 \begin{equation}
     \label{p4} 
([\nabla f (x_k)]_i)^2=(\sum_{j=1}^{N} w_j [\nabla f_j(x_k)]_i)^2\leq \sum_{j=1}^{N} w_j ([\nabla f_j(x_k)]_i)^2=\sum_{j=1}^{N} w_j ([s_k^j]_i)^2
\end{equation}
which further implies 
 \begin{equation}
     \label{p5} 
     \sum_{i\notin \A_{\N_k}} ([\nabla f (x_k)]_i)^2\leq \sum_{i\notin \A_{\N_k}} \sum_{j=1}^{N} w_j ([s_k^j]_i)^2.
     \end{equation}
Combining \eqref{p1}, \eqref{p3} and \eqref{p5} we obtain 
 \begin{eqnarray} \label{p6} 
    \|d(x_k)\|^2 &\leq & \sum_{i \in \A_{\N_k}} \sum_{j=1}^{N} w_j   ([s^j_k]_i)^2+\sum_{i\notin \A_{\N_k}} \sum_{j=1}^{N} w_j ([s_k^j]_i)^2\\\nonumber 
    &=& \sum_{j=1}^{N} w_j \left(\sum_{i \in \A_{\N_k}} ([s^j_k]_i)^2+\sum_{i\notin \A_{\N_k}}([s^j_k]_i)^2\right)\\\nonumber 
    &=& \sum_{j=1}^{N} w_j \|s_k^j\|^2. 
\end{eqnarray}
Combining this with \eqref{mbsum} we obtain for all $k \geq k_1$ 
\begin{equation}
        \label{mbsumf}
    f(x_{k+1})\leq f(x_k)  - c \|d(x_k)\|^2+C\varepsilon_k.
    \end{equation}
    
Taking into account both scenarios (FS and MB), i.e., \eqref{mbsumf} and \eqref{propfs}, we conclude the  proof with $\tilde{k} = k_1 $ in FS and $ \tilde{k} = \tilde{k}_1$ in MB case. 
\end{proof}

In order to obtain a.s. convergence, we impose the following assumption \cite{lsnmbb}. 

    
\begin{assumption}
    \label{assNEW}   
There exists a constant $C_{b}$ such that 
$\mathbb{E} ( \vert f(x_{\tilde{k}})\vert ) \leq C_{b}$, where $\tilde{k} $ is specified  in  Theorem \ref{teorema1}.
\end{assumption}
 The above assumption is clearly fulfilled if the sequence $ \{f(x_k)\} $ is bound\-ed. Moreover, in the case of bounded iterates (e.g., compact feasible set as a special case in Section \ref{sec4}) the assumption holds for many objective functions.  But it also holds in more general situations as it allows the case when  
$f_{\mathcal{N}_{\tilde{k}}}(x_{\tilde{k}})$ is unbounded in general (for some sample paths), but the expectation over all possible sample paths is still bounded.
Let us denote by $\mathbb{E}_{FS}(\cdot):= \mathbb{E}(\cdot \mid FS)$ the conditional expectation concerning all the sample paths falling into the FS scenario. Analogously, we define $\mathbb{E}_{MB}(\cdot) := \mathbb{E}(\cdot \mid MB)$. It can be shown (see \cite{lsnmbb} e.g.) that Assumption A\ref{assNEW} implies 
\begin{equation}
    \label{cb12}
\mathbb{E}_{FS} ( \vert f(x_{\tilde{k}_1})\vert ) \leq C^{FS}_{b}\quad \mbox{and}  \quad \mathbb{E}_{MB} ( \vert f(x_{k_1})\vert ) \leq C^{MB}_{b},\end{equation}
for some constants $C^{FS}_{b}, C^{MB}_{b}$
where $\tilde{k}_1 $ and $k_1$ are as in the proof of Theorem \ref{teorema1}. 

Next, we state the main convergence result for AS-NC. 
\begin{theorem} \label{teorema2}
    Suppose that Assumptions A\ref{A1} and A\ref{assNEW} hold. Then a.s. every accumulation point of sequence $\{x_k\}_{k \in \mathbb{N}}$ generated by AS-NC is a stationary point of the problem \eqref{problem-nc}. 
\end{theorem}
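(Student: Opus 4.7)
The plan is to use Theorem \ref{teorema1} as the engine and combine it with the summability condition \eqref{sumable} and the lower boundedness of $f$ (implied by Assumption A\ref{A1}) to conclude that $\sum_k \|d(x_k)\|^2 < \infty$ a.s., from which $\|d(x_k)\| \to 0$ a.s., and then invoke Theorem \ref{bmr}(b) together with continuity of the map $x \mapsto d(x)$ to identify every accumulation point with a stationary point of \eqref{problem-nc}.

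First, I would fix a sample path on which the conclusion of Theorem \ref{teorema1} holds, so that there is a finite random index $\tilde{k}$ with
\begin{equation*}
f(x_{k+1}) \leq f(x_k) - \bar{c}\,\|d(x_k)\|^2 + \bar{C}\,\varepsilon_k, \quad k \geq \tilde{k}.
\end{equation*}
Summing this inequality from $\tilde{k}$ to an arbitrary $K \geq \tilde{k}$ and using $f(x_{K+1}) \geq f_{low}$ gives
\begin{equation*}
\bar{c}\sum_{k=\tilde{k}}^{K}\|d(x_k)\|^2 \leq f(x_{\tilde{k}}) - f_{low} + \bar{C}\sum_{k=\tilde{k}}^{K}\varepsilon_k \leq f(x_{\tilde{k}}) - f_{low} + \bar{C}\,\bar{\varepsilon}.
\end{equation*}
Letting $K \to \infty$, the right hand side is a.s.\ finite provided $f(x_{\tilde{k}})$ is a.s.\ finite. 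This is exactly where Assumption A\ref{assNEW} enters: it guarantees, via \eqref{cb12}, that $f(x_{\tilde{k}})$ is integrable in both the FS and MB scenarios, hence a.s.\ finite. Consequently $\sum_{k=0}^{\infty}\|d(x_k)\|^2 < \infty$ a.s., and therefore $\lim_{k\to\infty}\|d(x_k)\| = 0$ almost surely.

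To finish, let $x^\star$ be an accumulation point of $\{x_k\}$ along a subsequence $\{x_{k_j}\}$. Feasibility of every iterate and closedness of $S=\{x \geq 0\}$ imply $x^\star \in S$. Since the orthogonal projection onto $S$ is continuous and $\nabla f$ is continuous by Assumption A\ref{A1}, the mapping $x \mapsto d(x) = \pi_S(x-\nabla f(x)) - x$ is continuous. Therefore $d(x^\star) = \lim_{j\to\infty} d(x_{k_j}) = 0$, and Theorem \ref{bmr}(b) applied to $f$ yields that $x^\star$ is a stationary point of \eqref{problem-nc}.

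The main delicate point is justifying the a.s.\ finiteness of $f(x_{\tilde{k}})$, because $\tilde{k}$ is a random index whose distribution depends on the realized sample path and on whether the FS or MB scenario occurs. The clean way is to decompose the total expectation into the FS and MB pieces as in \eqref{cb12} and observe that on each piece $\tilde{k}$ coincides with a well-defined finite random index ($\tilde{k}_1$ or $k_1$) for which Assumption A\ref{assNEW} supplies a uniform bound, so that the telescoping argument above can be carried out on each piece and then combined. The remaining steps (summability, continuity of $d$, and the characterization in Theorem \ref{bmr}(b)) are then routine.
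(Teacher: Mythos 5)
Your proposal is correct, and the final step (continuity of $x\mapsto d(x)$, feasibility of the limit, Theorem \ref{bmr}(b)) matches the paper exactly. The middle step, however, takes a genuinely different route. The paper sums \eqref{glavna1}, \emph{takes expectations}, invokes Assumption A\ref{assNEW} and \eqref{sumable} to conclude $\sum_{j}\mathbb{E}(\|d(x_{\tilde{k}+j})\|^2)<\infty$, and then passes from convergence in expectation to almost sure convergence via the extended Markov inequality and the Borel--Cantelli lemma. You instead argue pathwise: on each sample path where Theorem \ref{teorema1} holds you telescope, bound $f(x_{K+1})$ below by $f_{low}$, and get $\sum_k\|d(x_k)\|^2<\infty$ directly, with no probabilistic machinery beyond the a.s.\ event from Theorem \ref{teorema1}. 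Your route is more elementary and, in fact, stronger than you claim: the quantity $f(x_{\tilde{k}})$ is automatically finite on every path, since $f$ is a real-valued (continuously differentiable) function and $\tilde{k}$ is a.s.\ finite, so the ``delicate point'' you flag is not delicate at all and Assumption A\ref{assNEW} is not actually needed for your pathwise argument --- it is only needed for integrability, i.e., for the paper's expectation-based version and for the complexity bounds in Theorem \ref{teorema3}. In short: the paper's approach buys the summability of $\mathbb{E}(\|d(x_k)\|^2)$ (reused later for complexity), while yours buys a shorter, assumption-lighter proof of the a.s.\ statement itself; both are valid.
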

\begin{proof}
According to \eqref{glavna1} we have that a.s. 
$$f(x_{\tilde{k}+l})\leq f(x_{\tilde{k}}) -\bar{c} \sum_{j=0}^{l-1}\|d(x_{\tilde{k}+j})\|^2+\bar{C}  \sum_{j=0}^{l-1} \varepsilon_{\tilde{k}+j},$$
for any $l \in \mathbb{N}$. Applying the expectation and using Assumption A\ref{assNEW} together with sumability of $ \varepsilon_k $ given in \eqref{sumable}, by letting $l \to \infty$ we obtain 
$$\sum_{j=0}^{\infty}\mathbb{E}(\|d(x_{\tilde{k}+j})\|^2) <\infty. $$
Now, the  extended Markov's inequality and the Borel-Cantelli lemma (see e.g.  \cite{lsnmbb} for details), we conclude that 
\begin{equation}
   \mathbb{P}( \lim_{k\to\infty} d(x_k) = 0)=1.
\label{dknula}
\end{equation} 
Let $x^*$ be an arbitrary accumulation point of the sequence $\{x_k\}$, and let $K_0 \subset \mathbb{N}$ be a subsequence such that
$$ \lim_{k \in K_0} x_k = x^*.$$
Due to continuity of the gradient and the projection operator, from  \eqref{dknula} we conclude that a.s. 
$$
0  =\lim_{k \in K_0} d(x_k)=\lim_{k \in K_0}\left( \pi_S(x_k - \nabla f(x_k))-x_k\right)= \pi_S(x^* - \nabla f(x^*)) - x^*=d(x^*)$$
and by  Theorem~\ref{bmr} b) and the feasibility of the iterates, we conclude that $ x^* $ is a.s.  a stationary point of problem~\eqref{problem-nc}, which completes the proof. 
\end{proof}

Next, we analyze the complexity of the proposed method. The analysis combines techniques of \cite{SBNKNKJ}, \cite{grapiglia}, and \cite{lsnmbb}. We impose the assumption used in \cite{lsnmbb}. It states that the local cost functions are not homogeneous in the following sense.

\begin{assumption} \label{asswcc}
For each $k$ there exists at least one  function $f_i$ such that the condition \eqref{ac}
 \begin{eqnarray*}
   r_{\D_k}=0 \quad \mbox{and} \quad  f_{\D_k}(\bar{x}_k)\leq f_{\D_k}(x_k)  - c  \|   s_k\|^2+C\varepsilon_k,
\end{eqnarray*}
is violated. 
\end{assumption}
 This assumption is likely to be satisfied in the case of data fitting if the data is heterogeneous or if the local cost functions  $ f_i $ are of a different type. In fact, considering, for instance, linear least squares problems, it can easily happen that a descent direction of one function is an ascent direction of another one.  

\begin{theorem} \label{teorema3}
    Suppose that Assumptions A\ref{A1}, A\ref{assNEW} and A\ref{asswcc} hold. Then the expected number of iterations to reach $\|d(x_k)\| < \nu $ is upper bounded  by 
    $$\hat{k}_{E}= \left\lceil \frac{N-1}{q} \right\rceil+\left\lceil \frac{C^{FS}_b-f_{low}+\bar{\varepsilon}}{\bar{c} \nu^2} \right\rceil,$$
    where $\bar{c}$ is as in Theorem \ref{teorema1},  $C_b^{FS}$  as in \eqref{cb12} and  $q=\min \{w_1,...,w_N\}^{N-1}$.
\end{theorem}

\begin{proof}
Assumption A\ref{asswcc} ensures that for every iteration $k$, there exists at least one function $f_i$ that violates the condition \eqref{ac}. Therefore, according to the distribution of $\D_k$ \eqref{Dk}, we conclude that  
$$
\mathbb{P}(\mathcal{D}_k \in \mathcal{D}^{-}_k) \geq \min \{w_1,...,w_N\}^{D_k}\geq  \min \{w_1,...,w_N\}^{N-1}=q. 
$$
Further, let us denote by $S_k$ a random variable that counts the number of increments of the sample size until iteration $k$. Notice that $S_k$ can be represented as 
$
S_k = I_1 + I_2 + \dots + I_k, 
$
where $I_k$ is an indicator variable, i.e., $ I_k = 1 $ if $ N_k > N_{k-1} $ and $ I_k = 0 $ otherwise. Furthermore, according to step S5 of AS-NC algorithm,   the increase of the sample size happens if and only if $\D_k \in \D^{-}_k$ and thus  $$\mathbb{E}(I_k) = P(I_k = 1) = P(\D_k \in \D^{-}_k) \geq q,$$ which further implies 
\begin{equation}
\mathbb{E}(S_k) \geq kq.
    \label{Sk}
\end{equation}
Let $ \tilde{N} $ represent the number of increments of the sample size needed to reach the full sample.{\footnote{For instance, if we set $ N_{k+1} = N_k + 1 $ at the end of step S5 of AS-NC, then $ \tilde{N} = N - N_0 $. See \cite{lsnmbb} and the text after Assumption 4 therein for further discussion on this topic. }}
Requiring $ \mathbb{E}(S_{\tilde{k}}) = \tilde{N} $ and using \eqref{Sk}, we conclude that the expected number of iterations to reach the full sample is bounded from above by $ \lceil \tilde{N}/q \rceil $ which can further be upper bounded by 
\begin{equation} \label{exp1}
   \left \lceil \frac{N-1}{q}\right\rceil.  
\end{equation} 

Furthermore,   let $\tilde{k}_1$ be the starting iteration of the FS phase. Then the decrease condition  \eqref{propfs} holds and according to Assumptions A\ref{A1}, A\ref{assNEW} and  \eqref{sumable} we conclude that for any $j \in \mathbb{N}$ we have 
\begin{equation} \label{exp2} 
\sum_{k=\tilde{k}_1}^{\tilde{k}_1+j} \mathbb{E}_{FS}(\|d(x_k)\|^2) \leq \frac{C^{FS}_b - f_{\text{low}} +  \bar{\varepsilon}}{\bar{c}}.
\end{equation}
Obviously, for each FS scenario there holds $\lim_{k \to \infty} \|d(x_k)\|=0$. Now, let us denote by $T$ the number of iterations   (counting from $\tilde{k}_1$) needed to reach  $ \|d(x_k)\| < \nu $. Then, we have  
$$
\sum_{k=\tilde{k}_1}^{\tilde{k}_1+T-1} \mathbb{E}_{FS}(\|d(x_k)\|^2) \geq \sum_{k=\tilde{k}_1}^{\tilde{k}_1+T-1} \nu^2 = T \cdot \nu^2
$$
and thus due to \eqref{exp2} we obtain 
$$T\leq \frac{C_b^{FS} - f_{\text{low}} +  \bar{\varepsilon}}{\nu^2 \bar{c}}.$$
Combining this with \eqref{exp1} we obtain the result. 
\end{proof}

{\bf{Remark 1. }} Notice that the expected complexity bound $\hat{k}_E$ is very conservative.  Since we observe the FS scenario in the previous theorem, instead of $\bar{c}$, we can use $c_1 t_{min}$. Moreover, instead of $N-1$, we can take $\tilde{N}$, which reveals the influence of the dynamics of increase used in step S5 of AS-NC to the complexity bound. Finally, $q$ in fact depends on the size of the additional sample $D_k$. Thus, setting  e.g. $D_k=1$ for each $k$ yields  $q=\min \{w_1,...,w_N\}$ which can be significantly larger than $q=\min \{w_1,...,w_N\}^{N-1}$.  The complexity bound is of order $ {\cal O}(\nu^{-2}) $ which corresponds to the deterministic case. If we take $N_k = N$ for all $k$, the bound becomes deterministic, i.e., the vicinity of a stationary point is reached after at most $\hat{k}= \left\lceil \frac{f(x_0)-f_{low}+\bar{\varepsilon}}{\bar{c} \nu^2} \right\rceil,$
iterations. In this case, Assumptions A\ref{assNEW} and A\ref{asswcc} are not relevant and the method coincides with the standard projected gradient method well known from the literature, yielding the same complexity bounds as in \cite{kompleksnost-det}.

We end this analysis by observing the strongly convex poblems. In that case, under Assumption A\ref{A1}, there exists a unique solution  $x^*$   of problem \eqref{problem-nc}.   
\begin{theorem} \label{teorema4}
Suppose that Assumption A\ref{A1} holds and that the sequence $\{x_k\}_{k \in \mathbb{N}}$ generated by AS-NC is bounded. If the function $f$ is strongly convex, then $\lim_{k \to \infty} x_k=x^*$ a.s. 
\end{theorem}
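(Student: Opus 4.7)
The plan is to combine the almost sure stationarity result of Theorem \ref{teorema2} with two classical deterministic facts: strongly convex functions admit a unique minimizer on a convex feasible set, and a bounded sequence with a unique accumulation point must converge to it. Since strong convexity of $f$ together with boundedness (and closedness, convexity) of $S=\mathbb{R}^n_+$ yields a unique minimizer $x^*$, and since $f$ is convex, the first-order optimality condition $d(x)=0$ characterizes $x^*$ as the unique stationary point of problem \eqref{problem-nc} by Theorem \ref{bmr} b). Thus stationarity and optimality coincide in this setting.

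Starting from this observation, I would proceed as follows. Let $\Omega_0$ be the event of probability one provided by Theorem \ref{teorema2} on which every accumulation point of $\{x_k\}$ is a stationary point of \eqref{problem-nc}. Fix a sample path $\omega \in \Omega_0$. By the assumed boundedness of $\{x_k\}$, the set of accumulation points is nonempty; by the previous paragraph, every accumulation point equals the unique stationary point $x^*$. Hence $\{x_k\}$ has a unique accumulation point on this path.

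The remaining step is the purely deterministic implication that a bounded sequence with a single accumulation point converges to that point. Assume for contradiction that $x_k \not\to x^*$ along the fixed sample path. Then there exist $\eta>0$ and a subsequence $\{x_{k_j}\}$ with $\|x_{k_j}-x^*\|\geq \eta$ for all $j$. Since this subsequence is bounded, by Bolzano--Weierstrass it admits a further subsequence converging to some $y\in \mathbb{R}^n$ with $\|y-x^*\|\geq \eta$, so $y\neq x^*$. But $y$ is then an accumulation point of $\{x_k\}$ different from $x^*$, contradicting uniqueness. Hence $x_k\to x^*$ on every path in $\Omega_0$, which yields $\lim_{k\to\infty}x_k=x^*$ almost surely.

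The main subtlety is simply bookkeeping of the probabilistic quantifier: Theorem \ref{teorema2} is applied pathwise on a probability-one event, after which all arguments are deterministic. There is no delicate estimation needed here, since the strong convexity of $f$ reduces the possible set of accumulation points to a singleton, and boundedness of $\{x_k\}$ closes the argument. No additional regularity beyond Assumption A\ref{A1} (used already in Theorem \ref{teorema2}) and strong convexity of $f$ (needed only to guarantee the uniqueness of the minimizer, and hence of the stationary point via Theorem \ref{bmr} b)) is required.
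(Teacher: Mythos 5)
Your proof follows essentially the same route as the paper: invoke Theorem \ref{teorema2} to get that all accumulation points are a.s.\ stationary, use strong convexity to identify the unique stationary point with the unique minimizer $x^*$, and conclude by the standard fact that a bounded sequence whose accumulation points all equal $x^*$ converges to $x^*$. One small omission: Theorem \ref{teorema2} requires Assumption A\ref{assNEW} in addition to A\ref{A1}, so before applying it you must note (as the paper does) that boundedness of $\{x_k\}$ implies A\ref{assNEW}; your closing claim that nothing beyond A\ref{A1} and strong convexity is needed skips this verification, and the parenthetical appeal to ``boundedness of $S=\mathbb{R}^n_+$'' is inaccurate ($S$ is unbounded; uniqueness of the minimizer follows from strong convexity on a closed convex set alone).
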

\begin{proof} Bounded iterates imply Assumption A\ref{assNEW} and thus Theorem \ref{teorema2} implies that every accumulation point of the sequence $\{x_k\}_{k \in \mathbb{N}}$ is a stationary point of \eqref{problem-nc} a.s.. On the other hand, the strong convexity implies that $x^*$ is the unique stationary point of problem \eqref{problem-nc}. Therefore, we conclude that all accumulation points of the sequence $\{x_k\}_{k \in \mathbb{N}}$ are equal to $x^*$ a.s., which further implies that the whole sequence converges to $x^*$ a.s. This completes the proof.  
\end{proof}

\section{Box Constraints: AS-BOX method}
\label{sec4}
Within this section, we observe the general box-constrained problems  \eqref{problem}. The analysis is essentially the same as for the non-negativity constraints case, and we focus on the differences needed to extend the results. The main difference is in the projection form, which further influences the changes in the definition of $r_{\D_k}$. These are, in fact, the only two modifications with respect to the AS-NC algorithm, as will be stated in the sequel. The convergence analysis is completely the same, except for the proof of Theorem \ref{teorema1}, which needs to be adapted to the general case. We start by analyzing the projection operator and specifying the form of the search direction in this setting.

Considering the set $ S= \{x \in \mathbb{R}^n \; | \; l_i \leq x_i \leq u_i, \; i=1,...,n\}, $ 
the search direction \eqref{pk} is thus given by  
\begin{equation} \label{pkbox}
[p_k]_i =
\begin{cases}
l_i-[x_k]_i, & [x_k]_i - [\nabla f_{\mathcal{N}_k}(x_k)]_i<l_i, \\
-[\nabla f_{\mathcal{N}_k}(x_k)]_i, & l_i\leq [x_k]_i - [\nabla f_{\mathcal{N}_k}(x_k)]_i\leq u_i, \quad i = 1, \ldots, n\\
u_i-[x_k]_i, &[x_k]_i - [\nabla f_{\mathcal{N}_k}(x_k)]_i>u_i.
\end{cases} 
\end{equation}
Analogously to \eqref{INk} we define 
\begin{equation*}
[\tilde{I}_{\mathcal{N}_k}]_i =
\begin{cases}
1, & \text{if } [x_k]_i - [\nabla f_{\mathcal{N}_k}(x_k)]_i < l_i, \\
2, & \text{if } l_i \leq [x_k]_i - [\nabla f_{\mathcal{N}_k}(x_k)]_i \leq u_i,  \quad i = 1, \ldots, n\\
3, & \text{if } [x_k]_i - [\nabla f_{\mathcal{N}_k}(x_k)]_i > u_i,
\end{cases}
\end{equation*}
and  the indicator vector $ \tilde{I}_{\mathcal{D}_k}$ accordingly. Then, the sparsity similarity  vector analogous to \eqref{rk} is defined as 
\begin{equation}
    \label{trk}
    \tilde{r}_{\mathcal{D}_k}=\| \tilde{I}_{\mathcal{N}_k}- \tilde{I}_{\mathcal{D}_k} \|.
\end{equation}

\subsection{The Algorithm}
The algorithm differs from AS-NC only in steps S2 and S4. We state it for completeness. 

\noindent {\bf Algorithm 2: AS-BOX} (\textbf{A}dditional \textbf{S}ampling - \textbf{BOX} constraints)
\label{AS-BOX}
\begin{itemize}
\item[\textbf{S0}] \textit{Initialization.} Input: $x_0 \in S, N_0 \in \mathbb{N}, \beta, c,c_1 \in (0,1), C>0$, $\{\varepsilon_k\}$ satisfying \eqref{sumable}. Set $k:=0$.
\item[\textbf{S1}] \textit{Subsampling.} If $N_k<N$, choose $\N_k$ via \eqref{Nk}. Else, set $f_{\N_k}=f$.
\item[\textbf{S2}] \textit{Search direction.} Compute  $p_k$ via \eqref{pkbox}.
\item[\textbf{S3}] \textit{Step size.} Find  the smallest $j \in \mathbb{N}_0$ such that $t_k=\beta^j$ satisfies 
\begin{eqnarray}
    \label{lsbox}
    f_{\N_k}(x_k+t_k p_k)\leq f_{\N_k}(x_k)  + c_1 t_k (\nabla f_{\N_k}(x_k))^T p_k+\varepsilon_k.
\end{eqnarray}
Set $\bar{x}_k=x_k+t_k p_k$. 
\item[\textbf{S4}] \textit{Additional sampling.}\\ 
If $N_k=N$, set $x_{k+1}=\bar{x}_k$, $k=k+1$ and go to step S1.\\
Else choose $\D_k$ via \eqref{Dk} and compute \begin{eqnarray}
    \label{ddkbox}
    s_k = \pi_S(x_k-\nabla f_{\D_k}(x_k))-x_k
\end{eqnarray}
and $\tilde{r}_{\D_k}$ defined by  \eqref{trk}. 
\item[\textbf{S5}] \textit{Sample size update.}\\ If 
\begin{eqnarray}
    \label{acbox}
   \tilde{r}_{\D_k}=0 \quad \mbox{and} \quad  f_{\D_k}(\bar{x}_k)\leq f_{\D_k}(x_k)  - c  \|   s_k\|^2+C\varepsilon_k,
\end{eqnarray}
$N_{k+1}=N_k$.\\ 
Else choose $N_{k+1} \in \{N_{k}+1,...,N\}$. 
\item[\textbf{S6}] \textit{Iterate update.} \\If 
$$f_{\D_k}(\bar{x}_k)\leq f_{\D_k}(x_k)  - c  \|   s_k\|^2+C\varepsilon_k$$ holds 
set $x_{k+1}=\bar{x}_k$.\\
Else set $x_{k+1}=x_k$. 
\item[\textbf{S7}] \textit{Counter update.} Set $k=k+1$ and go to Step \textbf{S1}.  
\end{itemize}

\subsection{Convergence analysis}
The convergence analysis is conducted under the same set of assumptions. Notice that the results of Lemma \ref{Lemma-mbdkminus} and \ref{lemma-armijo} also hold  for AS-BOX.  Now, we state the result analogous to Theorem \ref{teorema1}. Notice that $ \tilde{k} $ has the same role as in Theorem \ref{teorema1}.

\begin{theorem} \label{teorema1box}
    Suppose that Assumption A\ref{A1} holds.  Then a.s. there exists a finite, random iteration $\tilde{k}$ such that for  all $k \geq \tilde{k}$ there holds 
    \begin{equation*}
        f(x_{k+1})\leq f(x_k) -\bar{c} \|d(x_k)\|^2+\bar{C} \varepsilon_k,
    \end{equation*}
    where $\bar{c}=\min \{c, c_1, 2 c_1 (1-c_1) \beta / L\}$ and  $\bar{C}=\max \{1,C\}$.
\end{theorem}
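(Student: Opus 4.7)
\medskip

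The plan is to mirror the proof of Theorem~\ref{teorema1}, splitting into the FS and MB scenarios. The FS case is unchanged: once the full sample is reached (at some finite deterministic $\tilde k_1$), the line search condition \eqref{lsbox} together with Theorem~\ref{bmr}(a) and Lemma~\ref{lemma-armijo} gives directly
\[
f(x_{k+1})\le f(x_k)-c_1 t_{\min}\|d(x_k)\|^2+\varepsilon_k,
\]
so only the MB scenario requires genuine adaptation to the box structure.

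For the MB scenario, I would again invoke Lemma~\ref{Lemma-mbdkminus} to obtain a finite random $k_1$ beyond which $\D_k^-=\emptyset$, which (by the singleton argument used in Theorem~\ref{teorema1}) forces \eqref{acbox} to hold for every single local function $f_j$, and in particular forces $\tilde r_{\D_k}=0$ for every singleton $\D_k=\{j\}$. Using the three-valued indicator $\tilde I_{\N_k}$, this means that for each component $i$ the ``classification'' (below $l_i$, inside $[l_i,u_i]$, or above $u_i$) of $[x_k]_i-[\nabla f_j(x_k)]_i$ is the same for every $j$. I would then partition $\{1,\dots,n\}$ into three sets $\A^l_{\N_k}$, $\A^m_{\N_k}$, $\A^u_{\N_k}$ according to the common value of $\tilde I_{\N_k}$.

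The key step is to compare $d(x_k)$ (defined via the true gradient $\nabla f$) with the per-sample directions $s_k^j$. On $\A^l_{\N_k}$, every $j$ satisfies $[x_k]_i-[\nabla f_j(x_k)]_i<l_i$; averaging with weights $w_j$ gives the same strict inequality for $\nabla f(x_k)$, so $[d(x_k)]_i=l_i-[x_k]_i=[s_k^j]_i$ for every $j$, and $\sum_j w_j([s_k^j]_i)^2=([d(x_k)]_i)^2$. The symmetric argument on $\A^u_{\N_k}$ gives $[d(x_k)]_i=u_i-[x_k]_i=[s_k^j]_i$, again with equality after averaging. On the interior set $\A^m_{\N_k}$ one has $[s_k^j]_i=-[\nabla f_j(x_k)]_i$ and $[d(x_k)]_i=-[\nabla f(x_k)]_i$, and Jensen's inequality applied to $t\mapsto t^2$ yields
\[
([d(x_k)]_i)^2=\Bigl(\sum_{j=1}^N w_j[\nabla f_j(x_k)]_i\Bigr)^2 \le \sum_{j=1}^N w_j([s_k^j]_i)^2.
\]
Summing over $i\in\A^l_{\N_k}\cup\A^m_{\N_k}\cup\A^u_{\N_k}$ delivers the crucial bound $\|d(x_k)\|^2\le \sum_{j=1}^N w_j\|s_k^j\|^2$, exactly as in \eqref{p6}.

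From here the proof closes as in Theorem~\ref{teorema1}: multiplying the singleton version of \eqref{acbox} by $w_j$, summing, and using the iterate update $x_{k+1}=\bar x_k$ (which is forced in this regime) gives $f(x_{k+1})\le f(x_k)-c\|d(x_k)\|^2+C\varepsilon_k$. Combining this with the FS bound above and taking $\tilde k=k_1$ in the MB case and $\tilde k=\tilde k_1$ in the FS case with $\bar c=\min\{c,c_1,2c_1(1-c_1)\beta/L\}$ and $\bar C=\max\{1,C\}$ yields the claim. The main technical obstacle is the interior set $\A^m_{\N_k}$, where the equality $[d(x_k)]_i=[s_k^j]_i$ no longer holds and one must settle for the Jensen bound; the boundary sets $\A^l_{\N_k}$ and $\A^u_{\N_k}$ are easier since the clipped components are identical across $j$ and contribute exact equalities. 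Extending from non-negativity to general box constraints therefore amounts to bookkeeping across three regimes rather than two, without introducing any new analytical ingredient.
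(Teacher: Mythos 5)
Your proposal follows essentially the same route as the paper's proof: the FS case is carried over verbatim, and the MB case uses Lemma~\ref{Lemma-mbdkminus} with the singleton argument to equalize the three-way index partition across all local functions, then combines exact equalities on the two clipped sets with Jensen's inequality on the interior set to get $\|d(x_k)\|^2\le\sum_j w_j\|s_k^j\|^2$. This matches the paper's argument in both structure and detail, so the proposal is correct.
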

    \begin{proof} The first part of the proof is completely the same as the proof of Theorem \ref{teorema1}. Consider first the FS scenario. Analogously as in the proof of Theorem \ref{teorema1} we derive the inequality 
      \begin{equation}
            \label{propfs1}
            f(x_{k+1})\leq f(x_k)  - c_1 t_{min} \|d(x_k)\|^2+\varepsilon_k. 
        \end{equation}
    In the MB case, proceeding as in Theorem \ref{teorema1}  we conclude that for all $k \geq k_1$ there holds 
    \begin{equation}
        \label{mbsumbox}
    f(x_{k+1})\leq f(x_k)  - c \sum_{j=1}^{N} w_j \| s^j_k\|^2+C\varepsilon_k.
    \end{equation} 
     Notice that $ k_1 $ is again defined in Lemma \ref{Lemma-mbdkminus}. Now, let us define the following sets of indices
\begin{align}
\mathcal{L}_{\mathcal{N}_k} &:= \left\{ i \in \{1, \ldots, n\} \;\middle|\; [x_k]_i - [\nabla f_{\mathcal{N}_k}(x_k)]_i < l_i \right\},  \nonumber\\
\mathcal{I}_{\mathcal{N}_k} &:= \left\{ i \in \{1, \ldots, n\} \;\middle|\; l_i \leq [x_k]_i - [\nabla f_{\mathcal{N}_k}(x_k)]_i \leq u_i \right\}, \label{Iset} \nonumber\\
\mathcal{U}_{\mathcal{N}_k} &:= \left\{ i \in \{1, \ldots, n\} \;\middle|\; [x_k]_i - [\nabla f_{\mathcal{N}_k}(x_k)]_i > u_i \right\}. \nonumber
\end{align}
Using the same arguments as in the proof of Theorem \ref{teorema1} we conclude that  for all $k \geq k_1$ we have 
\begin{equation}
    \label{equality-sets}
    \mathcal{L}_{\mathcal{N}_k} = \mathcal{L}^1_k = \cdots = \mathcal{L}^N_k, \quad
    \mathcal{I}_{\mathcal{N}_k} = \mathcal{I}^1_k = \cdots = \mathcal{I}^N_k, \quad
    \mathcal{U}_{\mathcal{N}_k} = \mathcal{U}^1_k = \cdots = \mathcal{U}^N_k,
\end{equation}
where for each $j = 1, \dots, N$, we define 
\begin{align*}
\mathcal{L}^j_k &:= \left\{ i \in \{1, \ldots, n\} \;\middle|\; [x_k]_i - [\nabla f_j(x_k)]_i < l_i \right\}, \\
\mathcal{I}^j_k &:= \left\{ i \in \{1, \ldots, n\} \;\middle|\; l_i \leq [x_k]_i - [\nabla f_j(x_k)]_i \leq u_i \right\}, \\
\mathcal{U}^j_k &:= \left\{ i \in \{1, \ldots, n\} \;\middle|\; [x_k]_i - [\nabla f_j(x_k)]_i > u_i \right\}.
\end{align*}
This further implies that for all $ k \geq k_1 $, for all $ i \in \mathcal{L}_{\mathcal{N}_k} $, for all $ j \in \mathcal{N} $, there holds
$[x_k]_i - [\nabla f_j(x_k)]_i < l_i$,  i.e., $[x_k]_i < [\nabla f_j(x_k)]_i + l_i,$ and thus
$$
[x_k]_i = \sum_{j=1}^N w_j [x_k]_i < \sum_{j=1}^N w_j ([\nabla f_j(x_k)]_i + l_i) = [\nabla f(x_k)]_i + l_i, \quad \mbox{for all} \quad  i \in \mathcal{L}_{\mathcal{N}_k}.
$$
Similarly, for all $ i \in \mathcal{U}_{\mathcal{N}_k} $, we obtain
$[x_k]_i - [\nabla f(x_k)]_i > u_i,$
and for all $ i \in \mathcal{I}_{\mathcal{N}_k} $
$l_i \leq [x_k]_i - [\nabla f(x_k)]_i \leq u_i.$
Therefore, for all $ k \geq k_1 $ we have 
\begin{equation}
[d(x_k)]_i =
\begin{cases}
l_i - [x_k]_i, & \text{if } i \in \mathcal{L}_{\mathcal{N}_k}, \\
- [\nabla f(x_k)]_i, & \text{if } i \in \mathcal{I}_{\mathcal{N}_k}, \\
u_i - [x_k]_i, & \text{if } i \in \mathcal{U}_{\mathcal{N}_k}.
\end{cases}
\label{dgrad-box-final}\nonumber
\end{equation}
Now, let us estimate the norm of $d(x_k)$ for $k \geq k_1$ as follows 
\begin{eqnarray} \label{p1-box} 
\|d(x_k)\|^2 &=& \sum_{i=1}^{n} ([d(x_k)]_i)^2 \\
&=& \sum_{i \in \mathcal{L}_{\mathcal{N}_k}} (\ell_i - [x_k]_i)^2 + \sum_{i \in \mathcal{I}_{\mathcal{N}_k}} ([\nabla f(x_k)]_i)^2 + \sum_{i \in \mathcal{U}_{\mathcal{N}_k}} (u_i - [x_k]_i)^2. \nonumber
\end{eqnarray}
Recalling the definition $s^j_k := \pi_S(x_k-\nabla f_{j}(x_k))-x_k$, due to \eqref{equality-sets} we obtain 
$$
[s_k^j]_i =
\begin{cases}
\ell_i - [x_k]_i, & \text{if } i \in \mathcal{L}_{\mathcal{N}_k}, \\
- [\nabla f_j(x_k)]_i, & \text{if } i \in \mathcal{I}_{\mathcal{N}_k}, \\
u_i - [x_k]_i, & \text{if } i \in \mathcal{U}_{\mathcal{N}_k},
\end{cases}
$$
 for all $ k \geq k_1 $ and all $ j \in \mathcal{N}$.
Hence, for $ i \in \mathcal{L}_{\mathcal{N}_k} $ and $ k \geq k_1 $ there  holds
$$\sum_{j=1}^N w_j ([s_k^j]_i)^2 = \sum_{j=1}^N w_j (l_i - [x_k]_i)^2 = (l_i - [x_k]_i)^2,$$
and thus
\begin{equation}
    \label{box-low}
\sum_{i \in \mathcal{L}_{\mathcal{N}_k}} \sum_{j=1}^N w_j ([s_k^j]_i)^2 = \sum_{i \in \mathcal{L}_{\mathcal{N}_k}} (l_i - [x_k]_i)^2.
\end{equation}
Similarly, for $ i \in \mathcal{U}_{\mathcal{N}_k} $, we have
$$
\sum_{j=1}^N w_j ([s_k^j]_i)^2 = \sum_{j=1}^N w_j (u_i - [x_k]_i)^2 = (u_i - [x_k]_i)^2,
$$
and hence
\begin{equation}
    \label{box-up}
\sum_{i \in \mathcal{U}_{\mathcal{N}_k}} \sum_{j=1}^N w_j ([s_k^j]_i)^2 = \sum_{i \in \mathcal{U}_{\mathcal{N}_k}} (u_i - [x_k]_i)^2.
\end{equation}
Now, for $ i \in \mathcal{I}_{\mathcal{N}_k} $,
$$
([\nabla f(x_k)]_i)^2 = \left( \sum_{j=1}^N w_j [\nabla f_j(x_k)]_i \right)^2 \leq \sum_{j=1}^N w_j ([\nabla f_j(x_k)]_i)^2 = \sum_{j=1}^N w_j ([s_k^j]_i)^2,
$$
which further implies
\begin{equation}
\label{box-int}
\sum_{i \in \mathcal{I}_{\mathcal{N}_k}} ([\nabla f(x_k)]_i)^2 \leq \sum_{i \in \mathcal{I}_{\mathcal{N}_k}} \sum_{j=1}^N w_j ([s_k^j]_i)^2.
\end{equation}
Combining \eqref{p1-box}, \eqref{box-low}, \eqref{box-up}, and \eqref{box-int}, we obtain
\begin{align}
\label{ineq-box}
\|d(x_k)\|^2 &\leq \sum_{i \in \mathcal{L}_{\mathcal{N}_k}} \sum_{j=1}^N w_j ([s_k^j]_i)^2 +
\sum_{i \in \mathcal{I}_{\mathcal{N}_k}} \sum_{j=1}^N w_j ([s_k^j]_i)^2 +
\sum_{i \in \mathcal{U}_{\mathcal{N}_k}} \sum_{j=1}^N w_j ([s_k^j]_i)^2 \nonumber \\
&=  \sum_{i=1}^n \sum_{j=1}^N w_j  ([s_k^j]_i)^2 = \sum_{j=1}^N w_j \sum_{i=1}^n ([s_k^j]_i)^2 = \sum_{j=1}^N w_j \|s_k^j\|^2.
\end{align}
Furthermore, combining this with \eqref{mbsumbox} we obtain for all $k \geq k_1$ 
\begin{equation}
        \label{mbsumfbox}
    f(x_{k+1})\leq f(x_k)  - c \|d(x_k)\|^2+C\varepsilon_k.
    \end{equation}
    Finally, Taking into account both scenarios (FS and MB), i.e., \eqref{mbsumfbox} and \eqref{propfs1}, we conclude the proof. 
\end{proof}

The proofs of the following three main results for AS-BOX are  the same as for AS-NC, so we only provide statements for completeness. 

\begin{theorem} \label{teorema2box}
    Suppose that Assumptions A\ref{A1} and A\ref{assNEW} hold. Then a.s. every accumulation point of sequence $\{x_k\}_{k \in \mathbb{N}}$ generated by AS-BOX is a stationary point of problem \eqref{problem}. 
\end{theorem}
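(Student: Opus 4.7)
The plan is to mirror the proof of Theorem \ref{teorema2} essentially verbatim, because the only ingredient specific to the geometry of the feasible set that entered that proof was the descent inequality on the true objective, and Theorem \ref{teorema1box} has just reproduced the same inequality
\[
f(x_{k+1}) \leq f(x_k) - \bar{c}\,\|d(x_k)\|^2 + \bar{C}\,\varepsilon_k, \qquad k \geq \tilde{k},
\]
in the box-constrained setting (with $d(x)=\pi_S(x-\nabla f(x))-x$ now built from the box projection \eqref{pibox}). Consequently all subsequent steps are insensitive to whether $S$ is the nonnegative orthant or a general box.

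First I would telescope this inequality from $\tilde{k}$ to $\tilde{k}+l-1$, take expectations, and use $f \geq f_{low}$ from Assumption A\ref{A1}, $\mathbb{E}(|f(x_{\tilde{k}})|) \leq C_b$ from Assumption A\ref{assNEW}, and the summability \eqref{sumable} of $\{\varepsilon_k\}$ to bound the right-hand side uniformly in $l$. Letting $l \to \infty$ yields $\sum_{j=0}^{\infty}\mathbb{E}(\|d(x_{\tilde{k}+j})\|^2)<\infty$. Next I would apply the extended Markov inequality followed by the Borel--Cantelli lemma (as in the AS-NC proof and as detailed in \cite{lsnmbb}) to upgrade this summability to $\|d(x_k)\|\to 0$ almost surely. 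Given any accumulation point $x^*$ of $\{x_k\}$ along a subsequence $K_0$, continuity of $\nabla f$ and of the box projection $\pi_S$ (which is non-expansive) permits passage to the limit inside $d(x_k)=\pi_S(x_k-\nabla f(x_k))-x_k$, producing $d(x^*)=0$. Since every iterate lies in $S$ by construction of AS-BOX --- backtracking preserves feasibility thanks to convexity of $S$ and the fact that $x_k+p_k\in S$ --- we have $x^*\in S$, and Theorem \ref{bmr} b) then certifies that $x^*$ is a stationary point of \eqref{problem}.

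The main obstacle is essentially nonexistent: one only needs to confirm that the AS-NC argument did not silently exploit the particular form of the nonnegative-orthant projection. It uses only three properties of $\pi_S$, namely non-expansiveness (hence continuity), preservation of feasibility under the backtracking line search, and the equivalence between $d(x)=0$ and stationarity provided by Theorem \ref{bmr} b). All three hold for the general box $S$, so the transfer is routine and the conclusion follows.
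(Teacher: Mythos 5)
Your proposal is correct and matches the paper exactly: the authors state that the proofs of the three main AS-BOX results are identical to their AS-NC counterparts once Theorem \ref{teorema1box} supplies the descent inequality, which is precisely the argument you carry out (telescoping, expectation, extended Markov plus Borel--Cantelli, then continuity of $\nabla f$ and $\pi_S$ together with Theorem \ref{bmr} b)). Your explicit check that the AS-NC argument uses only non-expansiveness of the projection, feasibility preservation, and Theorem \ref{bmr} b) is a sound justification of the transfer the paper leaves implicit.
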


\begin{theorem} \label{teorema3box}
    Suppose that Assumptions A\ref{A1}, A\ref{assNEW} and A\ref{asswcc} hold. Then the expected number of iterations to reach $\|d(x_k)\| < \nu $ is upper bounded  by 
    $$\hat{k}_{E}=\left \lceil \frac{N-1}{q} \right\rceil+\left\lceil \frac{C^{FS}_b-f_{low}+ \bar{\varepsilon}}{\bar{c} \nu^2} \right\rceil,$$
    where $\bar{c}$ is as in Theorem \ref{teorema1},  $C_b^{FS}$  as in \eqref{cb12} and  $q=\min \{w_1,...,w_N\}^{N-1}$.
\end{theorem}

\begin{theorem} \label{teorema4box}
Suppose that Assumption A\ref{A1} holds and that the sequence $\{x_k\}_{k \in \mathbb{N}}$ generated by AS-BOX is bounded. If the function $f$ is strongly convex, then   $\lim_{k \to \infty} x_k=x^*$ a.s. 
\end{theorem}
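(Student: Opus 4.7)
The plan is to mirror the proof of Theorem \ref{teorema4} almost verbatim, since the box-constrained setting does not alter the structural ingredients on which that argument rests. First, I would observe that boundedness of the sequence $\{x_k\}$ together with continuity of $f$ (implied by Assumption A\ref{A1}) ensures that $|f(x_{\tilde k})|$ is bounded by a deterministic constant along every sample path, which trivially gives Assumption A\ref{assNEW}. This puts us in a position to invoke Theorem \ref{teorema2box}, from which we get that a.s. every accumulation point of $\{x_k\}$ is a stationary point of \eqref{problem}.

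Next, I would use strong convexity of $f$ on the nonempty closed convex feasible set $S$: this guarantees existence and uniqueness of a minimizer $x^*$ on $S$, and moreover the first-order stationarity condition $d(x)=0$ (equivalently $\pi_S(x-\nabla f(x))=x$) characterizes $x^*$ as the unique stationary point of \eqref{problem}. Combining this with the conclusion of Theorem \ref{teorema2box}, we obtain that a.s. every accumulation point of $\{x_k\}$ equals $x^*$.

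Finally, I would close by invoking the standard fact that a bounded sequence in $\mathbb{R}^n$ whose set of accumulation points reduces to a single point must converge to that point. Applying this pathwise on the probability-one event, we conclude $\lim_{k\to\infty} x_k = x^*$ a.s.

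I do not expect any real obstacle here; the only thing worth being careful about is the measurability/logical quantifier ordering, namely that the "a.s." exception sets from Theorem \ref{teorema2box} and from Assumption A\ref{assNEW} being satisfied pathwise can be combined into a single probability-one event on which the deterministic argument (unique stationary point $\Rightarrow$ unique accumulation point $\Rightarrow$ convergence) is carried out. This is a routine union-of-null-sets argument, identical to the one implicit in the proof of Theorem \ref{teorema4}.
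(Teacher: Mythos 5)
Your proposal is correct and follows essentially the same route as the paper: boundedness implies Assumption A\ref{assNEW}, Theorem \ref{teorema2box} gives that all accumulation points are a.s. stationary, strong convexity forces the unique stationary point $x^*$, and a bounded sequence with a single accumulation point converges. The paper's proof is just the AS-NC argument (Theorem \ref{teorema4}) repeated verbatim, so no further comment is needed.
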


\section{Numerical results} 
\label{sec5}
In this section, we present numerical experiments designed to evaluate the performance of the proposed Algorithm \text{AS-BOX} and to compare it with existing methods from the literature. In particular, we focus on a comparison with the stochastic gradient-based interior-point method (SIPM) developed in \cite{curtis}, which was designed for solving smooth optimization problems with box constraints. The second benchmark method we consider is  PSGM (Projected Stochastic Gradient Method) as in  \cite{curtis}. In each iteration of the PSGM a stochastic gradient is computed and projected onto the feasible region defined by the box constraints. 
All the parameters used in our work that are related to SIPM and PSGM are the same as those provided in the numerical results section of \cite{curtis}.

The experiments 
comprise two binary classification models: 
1)  logistic regression (convex); 
2) a single-hidden-layer neural network with cross-entropy loss (nonconvex).  
The experiments were conducted on datasets from the LIBSVM repository, namely \textit{Mushrooms }($8124$ samples, $112$ features) and \textit{IJCNN1} ($49990$ samples, $22$ features). These datasets are widely used benchmarks due to their diversity in structure,
which enables a comprehensive evaluation of algorithmic performance under varying problem structures.  Labels were encoded in $\{-1, 1\}$ format for binary classification.

We generated $x_0$ for each problem with elements drawn from a uniform distribution over $\left[-0.01, 0.01\right ]$. 
In both experiments, for the AS-BOX algorithm we use the following parameters:  $D_k = 1$, $C = 1$, $\beta = 0.1$, $\eta = 10^{-4}$, $c = 10^{-4}$,  $\varepsilon_k = \frac{1}{k^{1.1}}$\footnote{{ We also tested several alternative schedules and observed qualitatively unchanged behavior, indicating robustness to the choice of $\varepsilon_k$.}}. The parameters given above were chosen following the standard settings used in prior work \cite{aspen, ipas,lsnmbb}.

\subsection{Logistic Regression Problem}
The first benchmark problem is given by 
$$
    \min_{x \in [-1,\,1]^n} 
    \frac{1}{N} \sum_{i=1}^{N} \log\!\big(1 + \exp(-b_i a_i^\top x)\big),
$$
where $(a_i,b_i)$ are the training samples, $b_i \in \{-1,1\}$ are binary labels. This model is convex and commonly used as a baseline for large-scale classification tasks. Box constraints $[-1,1]^n$ were imposed to conform with the setup in \cite{curtis} and to enable direct comparison with existing stochastic methods. We model the computational cost by $FEV_k$ – the number of scalar products required by the specified method to compute $x_k$, starting from the initial point $x_0$.

To evaluate the performance of the considered methods, we present: the distance between $x_k$ and the solution $x^*$ of the considered problem, i.e., $||x_k - x^*||$, against the computational cost measure $FEV_k$. The stopping criterion in all comparisons is a fixed budget of scalar products (FEV), so the methods are evaluated up to the same computational effort.


\begin{figure}[htb!]
    \centering
    \includegraphics[width=\textwidth]{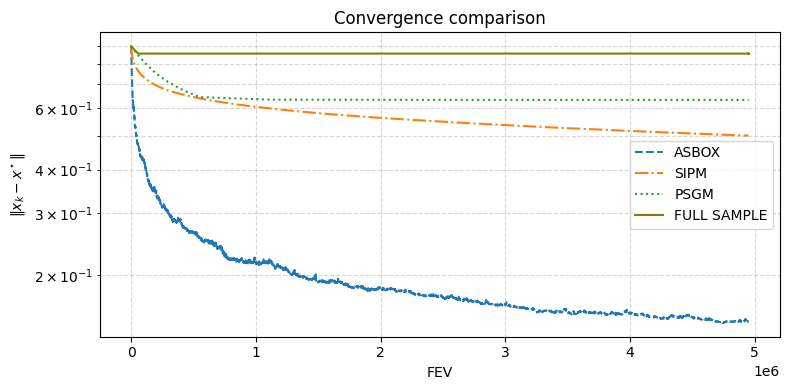}
    \caption{\footnotesize{ Distance to the solution $||x_k - x^*||$ versus $FEV_k$ for logistic regression on the \textit{Mushrooms} dataset. }}
     \label{Mushroomslog}
\end{figure}
In Figure \ref{Mushroomslog}, the comparison of the four algorithms (AS-BOX, SIPM, PSGM, and FULL SAMPLE (AS-BOX full sample, for all $k: N_k=N,$)) is demonstrated on the \textit{Mushrooms} dataset.
The graph shows the Euclidean distance $||x_k - x^*||$ as a function of the number of scalar products $FEV_k$, where AS-BOX achieves the fastest convergence toward the reference solution. SIPM shows a slower but steady decrease, while PSGM seems to be stagnating. On this dataset, the FULL SAMPLE ($N_k=N$) performs the worst overall, exhibiting the slowest convergence.

A similar behavior can be observed on the \textit{IJCNN1} dataset (Figure \ref{ijcnn1log}), where AS-BOX again achieves the best performance and reaches the smallest distance to the reference solution, attaining an accuracy level of $10^{-1}$ between $150{,}000$ and $200{,}000$ $FEV_k$, SIPM follows with moderate convergence, and PSGM remains the slowest method. On this dataset, the FULL SAMPLE ($N_k=N$) is slightly better than PSGM, but still clearly worse than AS-BOX and SIPM.
\begin{figure}[htb!]
    \centering
    \includegraphics[width=\textwidth]{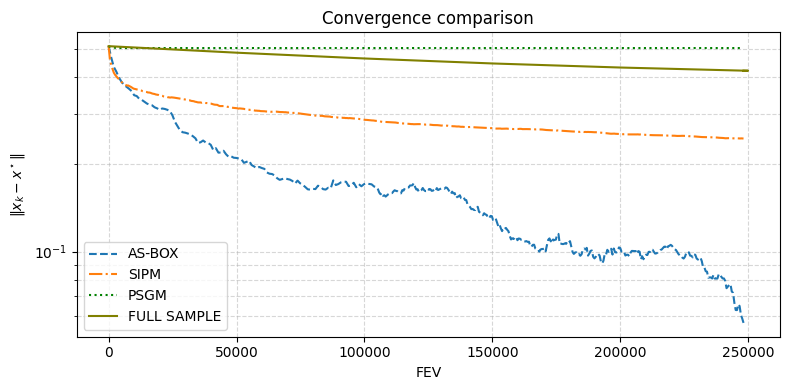}
    \caption{\footnotesize{Distance to the solution $||x_k - x^*||$ versus $FEV_k$ for logistic regression on the \textit{IJCNN1} dataset. }}
     \label{ijcnn1log}
\end{figure}

 In Figure \ref{comparisonfull} we report the evolution of the subsample size $N_k$ as a function of the computational budget $\mathrm{FEV}_k$ for AS-BOX on two datasets. The growth of $N_k$ is monotone and staircase-like - $N_k$ increases only when the acceptance test fails, and remains flat otherwise. On \textit{Mushrooms} ($m=8{,}124$), $N_k$ peaked at $168$ (about $2.1\%$ of the data); on \textit{IJCNN1} ($m=49{,}990$), it peaked at $504$ (about $1.0\%$). In neither case did the method reach the full sample size, indicating that AS-BOX makes steady progress without resorting to $N_k=N$, which underlies its computational efficiency at a fixed $\mathrm{FEV}_k$ budget.

\begin{figure}[ht!]
    \centering
    \begin{minipage}[c]{0.48\textwidth}
        \centering
        \includegraphics[width=\textwidth]{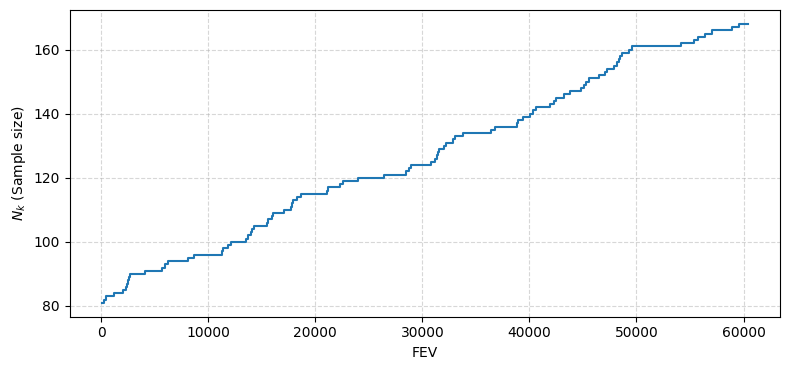}\\
        \footnotesize a)
    \end{minipage}
    \hfill
    \begin{minipage}[c]{0.48\textwidth}
        \centering
        \includegraphics[width=\textwidth]{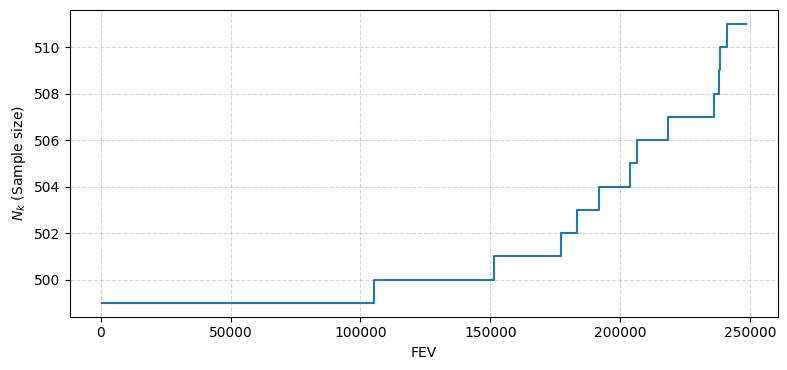}\\
        \footnotesize b)
    \end{minipage}
    \hfill
    \caption{\footnotesize{AS-BOX: evolution of the subsample size $N_k$ as a function of $FEV_k$. Part a): $N_k$ versus $FEV_k$ for the \textit{Mushrooms} dataset. Part b): $N_k$ versus $FEV_k$ for the \textit{IJCNN1} dataset.}}
    \label{comparisonfull}
\end{figure} 
 
\subsection{Neural Network with Cross-Entropy Loss}

The second problem considers training a single-hidden-layer neural network for binary classification. Let  $\tanh(\cdot)$ be the activation function in the hidden layer, while the output layer uses a sigmoid activation.  The network output is therefore given by
\[
    \sigma\big( W_2 \tanh(W_1 a + b_1) + b_2 \big),
\]
where $W_1, W_2$ are weight matrices and $b_1, b_2$ are bias vectors. The training objective is the average cross-entropy loss
\[
    \min_{x \in [-1,\,1]^d} 
    \frac{1}{N} \sum_{i=1}^N 
    \Big[ -y_i \log(\hat{y}_i) - (1-y_i)\log(1-\hat{y}_i) \Big],
\]
where $x$ collects all parameters $(W_1,W_2,b_1,b_2)$ and $d$ denotes the total number of network parameters. As in the logistic regression case, the parameters are constrained to lie within $[-1,1]$.

This problem is inherently nonconvex and poses a stronger challenge to optimization methods. Its inclusion in the test suite allows us to assess the robustness of AS-BOX when applied to neural network training under box constraints. Since the solution of such problem is not unique in general, we plot the optimality measure $||d(x_k)||$ 
against the computational cost measure $FEV_k$ to evaluate the performance of the considered methods.


Figure \ref{comparison} part a) shows the cross-entropy loss trajectory on the \textit{Mushrooms} dataset. 
AS-BOX again outperforms its competitors, with the loss dropping from approximately 
$2\times10^{-1}$ to below $10^{-2}$ within $10^{5}$ evaluations. 
SIPM displays a slower descent, converging around $5\times10^{-2}$, 
whereas PSGM initially outperforms AS-BOX  but stagnates around 
$3\times10^{-2}$.  
The results suggest that AS-BOX maintains its advantage across problems of different structure.
Figure \ref{comparison} part b) reports the stationarity measure for the \textit{Mushrooms} dataset. 
Consistent with the loss plots, AS-BOX achieves the most significant reduction, descending from roughly 
$10^{-1}$ to about $10^{-2}$ and exhibiting a stable convergence pattern despite minor stochastic fluctuations. 
SIPM steadily decreases but remains above $3\times10^{-2}$ by the end, while PSGM flattens out near  $4\times10^{-2}$ early in the run. 

\begin{figure}[ht!]
    \centering
    \begin{minipage}[c]{0.48\textwidth}
        \centering
        \includegraphics[width=\textwidth]{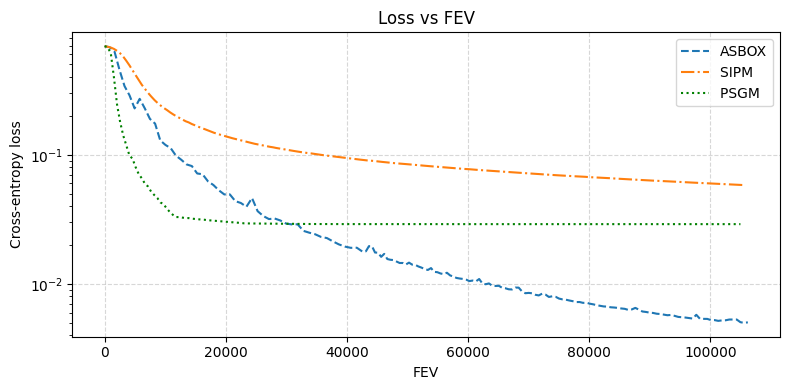}\\
        \footnotesize a)
    \end{minipage}
    \hfill
    \begin{minipage}[c]{0.48\textwidth}
        \centering
        \includegraphics[width=\textwidth]{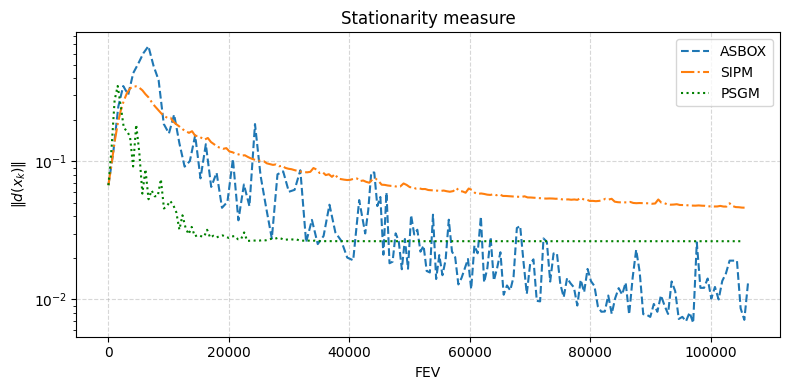}\\
        \footnotesize b)
    \end{minipage}
    \hfill
    \caption{\footnotesize{Part a): Cross-entropy loss versus $FEV_k$ for the \textit{Mushrooms} dataset. Part b): Stationarity measure $\|d(x_k)\|$ versus $FEV_k$ for the Mushrooms dataset. }}
     \label{comparison}
\end{figure} 

Next, Figure \ref{comparison1} part a) illustrates the evolution of the cross-entropy loss with respect to the number of function evaluations (FEV) for the \textit{IJCNN1} dataset. 
All algorithms start from a comparable initial loss of approximately $4.2\times10^{-1}$. 
The proposed AS-BOX method demonstrates the fastest and most consistent decrease, reaching a loss below $3.2\times10^{-1}$ after roughly $4\times10^{5}$ evaluations. The part b) on Figure \ref{comparison1} presents the stationarity measure $||d(x_k)||$ as a function of $FEV_k$ on the same dataset. 
This metric reflects how close the iterates are to satisfying the first-order optimality conditions under box constraints. 
AS-BOX exhibits the steepest decline, dropping below $2\times10^{-2}$ by the end of the run, indicating near-stationarity. 
SIPM converges more slowly, stabilizing around $4\times10^{-2}$, 
while PSGM decreases rapidly at first but stagnates near $4\times10^{-2}$. 
This highlights AS-BOX’s ability to achieve higher stationarity accuracy compared to the other methods. These results confirm the superior long-term convergence behavior of AS-BOX.

\begin{figure}[ht!]
    \centering
    \begin{minipage}[c]{0.48\textwidth}
        \centering
        \includegraphics[width=\textwidth]{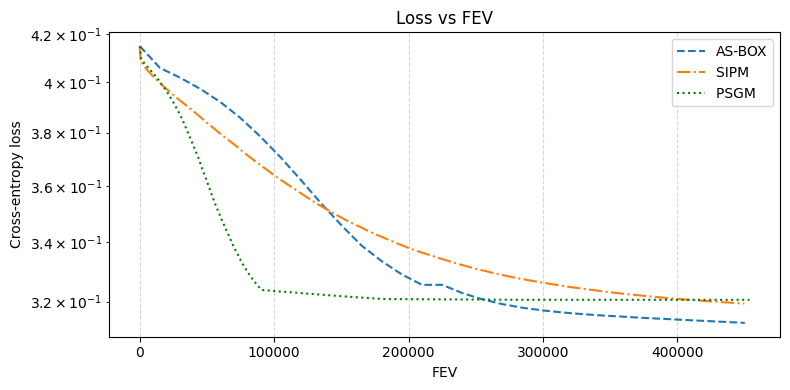}\\
        \footnotesize a)
    \end{minipage}
    \hfill
    \begin{minipage}[c]{0.48\textwidth}
        \centering
        \includegraphics[width=\textwidth]{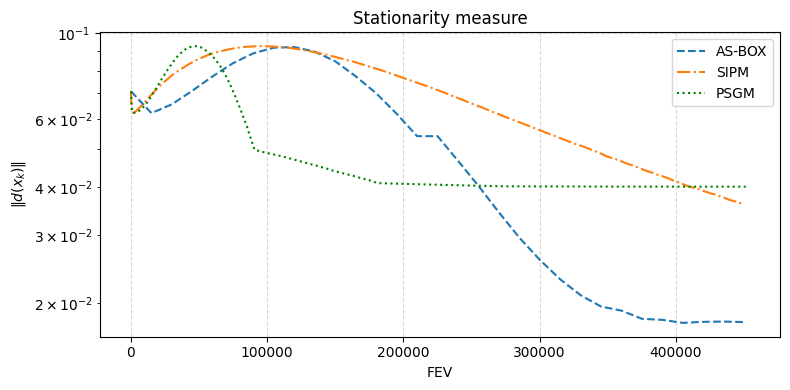}\\
        \footnotesize b)
    \end{minipage}
    \hfill
    \caption{\footnotesize{Part a): Cross-entropy loss versus $FEV_k$ for the \textit{IJCNN1} dataset. Part b): Stationarity measure $\|d(x_k)\|$ versus $FEV_k$ for the \textit{IJCNN1} dataset. }}
     \label{comparison1}
\end{figure} 

When comparing the results obtained on the \textit{IJCNN1} and \textit{Mushrooms} datasets, a consistent trend emerges: the AS-BOX algorithm achieves the biggest decrease in both cross-entropy loss and stationarity measure within the considered FEV budget, outperforming SIPM and PSGM across all experiments. However, the rate of convergence differs between the datasets due to their structural properties. The \textit{Mushrooms} dataset, being smaller and with more separable data, allows all algorithms to achieve lower loss values more quickly, with AS-BOX reaching near-optimal performance within $10^5$
  evaluations. In contrast, \textit{IJCNN1} is higher-dimensional and less separable, which slows down convergence for all methods; nevertheless, AS-BOX maintains a significant advantage over SIPM and PSGM, achieving roughly twice the reduction in stationarity by the end of the run.

\section{Conclusions}
\label{sec6}
A novel method (AS-BOX) for box-constrained weighted finite sum problems has been proposed. This method falls into the framework of stochastic projected gradient methods and uses non-monotone line search to adaptively determine the step size sequence, while retaining the feasibility of the iterates. The main novelty of AS-BOX lies in the adaptation of an additional sampling technique to box-constrained weighted finite-sum problems. Thus, the resulting method adaptively changes the sample size and conforms to different structures of the problems. AS-BOX also has a theoretical background - a.s. convergence is proved under a standard set of assumptions, without imposing the convexity. This makes it suitable for NN problems as well. Moreover, complexity analysis has been conducted as well, and a stronger convergence result is provided for strongly convex problems such as regularized logistic regression. Numerical study showed the efficiency of AS-BOX. 

Future work naturally tends to additional sampling methods for finite-sum problems with general, nonlinear equality and inequality constraints. 

\vspace{1cm} 

{\bf{Funding.}  } 
N. Kreji\' c and N. Krklec Jerinki\' c are  supported by the Science Fund of the Republic of Serbia, Grant no. 7359, Project LASCADO.  T. Ostoji\' c is partially supported by the Ministry of Science, Technological Development and Innovation of the Republic of Serbia (Grants No. 451-03-136/2025-03/200156) and by the Faculty of Technical Sciences, University of Novi Sad through project “Scientific and Artistic Research Work of Researchers in Teaching and Associate Positions at the Faculty of Technical Sciences, University of Novi Sad 2025” (No. 01-50/295). N. Vu\v ci\' cevi\' c is partially supported by the Ministry of Science, Technological Development and Innovation of the Republic of Serbia (Grant no. 451-03-137/2025-03/ 200122)




\end{document}